\def\0{{\bf{0}}}
\def\V{{\bf{V}}}
\def\U{{\bf{U}}}
\def\F{{\bf{F}}}
\def\e{{\bf{e}}}
\def\u{{\bf{u}}}
\def\x{{\bf{x}}}
\def\ul{\underline{\u}}
\def\n{{\bf{n}}}
\def\vv{{\bf{v}}}
\def\vl{\underline{\vv}}
\def\t{{\boldsymbol{\tau}}}
\definecolor{db}{rgb}{0.0470,0,0.5294}
\definecolor{dg}{rgb}{0.0,0.392,0.0}
\definecolor{firebrick}{rgb}{0.698,0.133,0.133}
\definecolor{bl}{rgb}{0.0,0.0,0.0}
\definecolor{linen}{rgb}{0.980,0.941,0.902}
\definecolor{ivory}{rgb}{1.0,1.0,0.941}
\definecolor{aliceblue}{rgb}{0.941,0.973,1.0}
\definecolor{beige}{rgb}{0.961,0.961,0.863}
\definecolor{tan}{rgb}{0.824,0.706,0.549}
\definecolor{lightsteelblue}{rgb}{0.690,0.769,0.871}
\definecolor{paleturquoise}{rgb}{0.686,0.933,0.933}
\definecolor{lightblue}{rgb}{0.678,0.847,0.902}
\definecolor{skyblue}{rgb}{0.529,0.808,0.922}
\definecolor{palegoldenrod}{rgb}{0.933,0.910,0.667}
\definecolor{lightgoldenrod}{rgb}{0.933,0.867,0.510}
\definecolor{lightyellow}{rgb}{1.0,1.0,0.878}
\definecolor{yellow}{rgb}{1.0,1.0,0.0}
\definecolor{lightyellow1}{rgb}{1.0,1.0,0.878}
\definecolor{lemonchiffon}{rgb}{1.0,0.980,0.804}
\definecolor{myyellow}{rgb}{1,1,.9}
\definecolor{darkgreen}{rgb}{0.0,0.392,0.0}
\definecolor{darkviolet}{rgb}{0.580,0.0,0.827}
\definecolor{lightsalmon}{rgb}{1.0,0.627,0.478}
\definecolor{orange}{rgb}{1.0,0.647,0.0}
\numberwithin{equation}{section}
\begin{document}

\title
{A variable timestepping algorithm for the unsteady Stokes/Darcy model}

\author{Yi Qin\thanks{%
School of Mathematics and Statistics, Xi'an Jiaotong University, Xi'an, Shaanxi 710049, China.
Email: \href{mailto:qinyi1991@stu.xjtu.edu.cn}{qinyi1991@stu.xjtu.edu.cn}. Subsidized by NSFC (grant No.11971378) and China Scholarship Council grant 201806280136.}
\and Yanren Hou\thanks{%
School of Mathematics and Statistics, Xi'an Jiaotong University, Xi'an, Shaanxi 710049, China.
Email: \href{mailto:yrhou@mail.xjtu.edu.cn}{yrhou@mail.xjtu.edu.cn}. Subsidized by NSFC (grant No.11971378) and China Scholarship Council grant 201806280136.}
\and Wenlong Pei\thanks{%
Department of Mathematics, University of Pittsburgh, Pittsburgh, PA 15260, USA.
Email: \href{mailto:wep17@pitt.edu}{wep17@pitt.edu}. Partially supported by NSF grant DMS 1817542.
}}


\date{\emty}
\maketitle



\begin{abstract}
This report considers a variable step time discretization algorithm proposed by Dahlquist, Liniger and Nevanlinna and applies the algorithm to the unsteady Stokes/Darcy model.
Although long-time forgotten and little explored, the algorithm performs advantages in variable timestep analysis of various fluid flow systems, including the coupled Stokes/Darcy model.
The paper proves that the approximate solutions to the unsteady Stokes/Darcy model are unconditionally stable due to the $G$-stability of the algorithm. Also variable time stepping error analysis
follows from the combination of $G$-stability and consistency of the algorithm. Numerical experiments further verify the theoretical results, demonstrating the accuracy and stability
of the algorithm for time-dependent Stokes/Darcy model.
\end{abstract}

\begin{keywords}
variable time stepping, $G$-stability, second order, coupled Stokes/Darcy model
\end{keywords}

\begin{AMS}
76D05, 76S05, 76D03, 35D05
\end{AMS}

\section{Introduction} \label{sec:introduction}
Stokes/Darcy model, simulating the coupling between surface and subsurface motion of fluid, deserves great interest in geophysics and related areas. Mathematical theory and numerical schemes for both steady and
unsteady Stokes/Darcy model
have been well developed in recent years \cite{badea2010SDmodel,discacciati2002SDmodel,ervin2009coupled,girault2009dg,hou2019solution,layton2002coupling,zuo2015numerical}.
Nevertheless, time discretization for unsteady Stokes/Darcy model is always a big problem where various timestep algorithms give accuracy and efficiency of computation to different levels.
Some simulations use constant timestep, first order, fully implicit scheme for simplicity,
e.g, \cite{ccecsmeliouglu2008SDBE,ccecsmeliouglu2009SDGalerkin,mu2010SDdecoupledmixed,qin2020SDgraddivBE,shan2013SDsubdomain,shan2013SDpartitionedBE}, while many others implement higher
order, constant timestep algorithms to increase accuracy, e.g. \cite{chen2013SDsecondefficient, chen2016SDthirdefficient,layton2012SDCNLF,li2018SDsecondpartition,qin2019SDtimefilterBE}.
Moreover, time stepping adaptivity through variable stepsize schemes is an ideal way of solving the conflict between time accuracy and computational complexity. Due to the limitations of the most existing
methods (e.g. BDF2 is not a $A$-stable under increasing stepsize), variable timestepping analysis for the unsteady Stokes/Darcy model is promising but little studied.

To solve this issue, we refer to a one-parameter family of two-step, one-leg method proposed by Dahlquist, Liniger and Nevanlinna (the DLN method) \cite{DLNStabilityofTwoStepMethods} and apply the method to time-dependent Stokes/Darcy model for variable
timestep analysis. The DLN algorithm maintains the $G$-stability \cite{dahlquist1978positive,dahlquist1976relation,Dahlquist1978,hairer1993solvingII} under any arbitrary sequence of time steps and keeps second order accuracy at same time. To begin with, consider the initial value problem
\begin{align} \label{eq:I.V.P}
x'(t) = f \left(t,x(t)\right), \ \ \ x(0) = x_{0},
\end{align}
where $x: [0,T] \rightarrow \mathbb{R}^{d}$ and $f: \mathbb{R} \times \mathbb{R}^{d} \rightarrow \mathbb{R}^{d}$ are vector-valued functions. Let $\{t_{n}\}_{n=0}^{N}$ be the grids on time interval $[0,T]$
and $k_{n}: = t_{n+1} - t_{n}$ be stepsize. Consequently, we define the stepsize parameter $\epsilon_{n} \in (-1,1)$ to be
\begin{align*}
\epsilon_{n} = \frac{k_{n} - k_{n-1}}{k_{n} + k_{n-1}}.
\end{align*}
Now given the two initial value $x_{0}$ and $x_{1}$, the one parameter DLN algorithm (with parameter $\theta \in [0,1]$) for the problem \eqref{eq:I.V.P} is
\begin{align} \label{eq:DLNscheme}
\sum_{j = 0}^{2} \alpha_{j}x_{n-1+j} = \left(\alpha_{2}k_{n} - \alpha_{0}k_{n-1}\right)f \left( \sum_{j = 0}^{2} \beta_{j,n} t_{n-1+j},\sum_{j = 0}^{2} \beta_{j,n} x_{n-1+j}  \right),
\end{align}
where coefficients $\{\alpha_{j}\}_{j = 0:2}$ (time-independent) and coefficients $\{\beta_{j,n}\}_{j = 0:2}$ (time-dependent) are
\begin{align*}
\begin{bmatrix} \alpha_{2} \vspace{0.2cm} \\ \alpha_{1} \vspace{0.2cm} \\ \alpha_{0} \end{bmatrix}  = \begin{bmatrix} \frac{\theta + 1}{2} \vspace{0.2cm} \\ \theta \vspace{0.2cm} \\ \frac{\theta - 1}{2} \end{bmatrix} ,
\ \ \ \begin{bmatrix} \beta_{2,n} \vspace{0.2cm} \\ \beta_{1,n} \vspace{0.2cm} \\ \beta_{0,n} \end{bmatrix} =
\begin{bmatrix} \frac{1}{4}\left(1+\frac{1-\theta^2}{(1+\epsilon_{n} \theta)^{2}} + \epsilon_{n}^2\frac{\theta(1-\theta^2)}{(1+\epsilon_{n} \theta)^{2}} + \theta \right) \vspace{0.2cm}
\\ \frac{1}{2}\left(1 - \frac{1-\theta^2}{(1+\epsilon_{n} \theta)^{2}} \right) \vspace{0.2cm} \\
\frac{1}{4}\left(1+\frac{1-\theta^2}{(1+\epsilon_{n} \theta)^{2}} - \epsilon_{n}^2\frac{\theta(1-\theta^2)}{(1+\epsilon_{n} \theta)^{2}} - \theta \right) \end{bmatrix}.
\end{align*}
The coefficients of $\{\alpha_{j}\}_{j = 0:2}$, $\{\beta_{j,n}\}_{j = 0:2}$ and the average time step $\alpha_{2}k_{n} - \alpha_{0}k_{n-1}$ are constructed to ensure the $G$-stability and second order accuracy of the method.
Combining these fine properties with existing numerical schemes for spatial discretization (e.g. finite element method \cite{guangzhi2017SDlocalFEMBJ,kanschat2010SDStrongFEM,riviere2005SDdiscontinuousFEM},
two grid decoupled method \cite{hou2016SDoptimaltwogrid,yi2018SDoptimaltwogrid,zuo2018SDparalleltwogrid,zuo2014SDdecouplingtwogrid,zuo2015SDtwogridmixed}, multi-grid decoupled
method \cite{arbogast2009SDdiscretizationmultgrid,zuo2018SDmultigrid}, domain decomposition method \cite{feng2012SDnoniterativeDecomposiion,he2015steadySDdomaindecomposition}, etc.), the paper provides with complete variable timestep analysis for unsteady Stokes/Darcy Model (stability and error analysis).

The reminder of the paper is organized as follows: we review the time dependent Stokes/Darcy model (including necessary notations) in section \ref{sec:SDmodel}. Some preliminaries and two lemmas about
properties of the DLN algorithm \eqref{eq:DLNscheme} are presented in section \ref{sec:preliminaries}.
In section \ref{sec:StabErrorAnalysis}, we apply the variable timestepping DLN algorithm \eqref{eq:DLNscheme} to the unsteady Stokes/Darcy model and provide with detailed proofs of unconditional stability and second order convergence of approximate solutions, which are rarely done in other papers. Two numerical tests are given in section \ref{sec:numtests}. The variable timestepping test is aimed to verify the stability of
the approximate solutions and followed by a constant timestepping example to confirm the second order convergence by the DLN algorithm.

\subsection{Related Works} \label{subsec:relatedworks}
The variable timestepping analysis on computational fluid flow is little understood due to limitations of the most existing time discretization schemes. The DLN timestepping algorithm, which is second order, unconditionally $G$-stable under variable time steps, has been applied to the Navier-Stokes equations for variable timestep stability and error analysis \cite{2020arXiv200108640L}. However the first choice for variable timestep analysis of
fluid flow is the first order fully implicit method (backward Euler method) for its simplicity and unconditional stability. Recently backward Euler method has been used in artificial compression algorithm with adaptivity for the Navier-Stokes equations \cite{Layton:2019:DoublyAdaptiveAC}. Furthermore it is possible that adding time filters on the backward Euler method increases the order of convergence while keeping the conditional stability for fluid flow \cite{Guzel2018TimeFI}.

\section{The Time-dependent Stokes/Darcy Model} \label{sec:SDmodel}
In this section, we consider the unsteady Stokes/Darcy model in region $\Omega=\Omega_f \cup \Omega_p$, where $\Omega_f$ is the incompressible fluid region and $\Omega_p$ is the porous media region.
The two regions are separated by the interface denoted by $\Gamma=\overline{\Omega}_f\cap\overline{\Omega}_p$ and $\n_f$ and $\n_p$ are the unit outward normal vectors on $\partial\Omega_f$ and $\partial\Omega_p$.
The schematic representation is displayed in Figure \ref{fig1}.
\begin{figure}[ptbh]
\centering
\par
{
		\begin{minipage}[t]{0.45\linewidth}
			\centering
			\includegraphics[width=2.3in,height=1.8in]{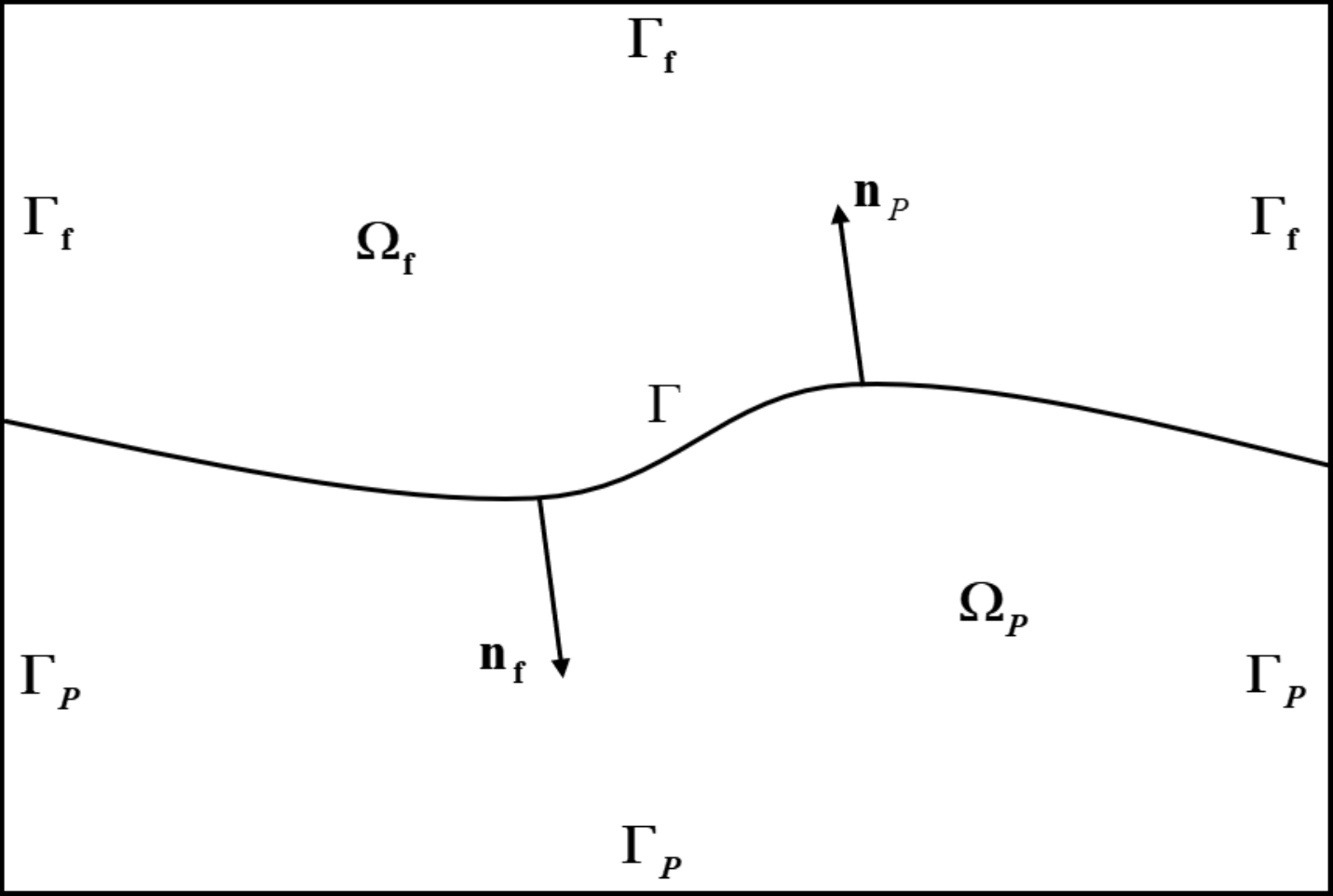}\\
			\vspace{0.02cm}
					\end{minipage}	} \centering
\vspace{-0.2cm}
\caption{A global domain $\Omega$ consisting of a fluid flow region $\Omega_f$ and a porous media flow region $\Omega_p$ is separated by an interface $\Gamma$.}
\label{fig1}
\end{figure}

For the finite time interval $[0, T]$, the fluid motion in $\Omega_f$  is governed by the time-dependent Stokes equations, i.e. the fluid velocity $\u_{f}(\x,t)$ and the pressure $p(\x,t)$ satisfy
\begin{align}\label{eq:EqStokes}
\frac{\partial \u_f}{\partial t} -\nabla \cdot \mathds{T}\left( \u_{f}, p \right) &= \mathbf{F}_{1}(\x,t) \ \ \ \ \ \ \text{in\ } \Omega_f\times(0,T), \notag \\
\nabla\cdot \u_f &= 0  \ \ \ \ \ \ \ \ \ \ \ \ \ \ \ \ \text{in\ } \Omega_f\times(0,T), \\
\u_{f}(\x,0) &= \u_{f}^0(\x)   \ \ \ \ \ \ \ \ \ \text{in\ } \Omega_f , \notag
\end{align}
where the stress tensor $\mathds{T}$ and the deformation rate tensor $\mathds{D}$ are defined as
\begin{align*}
\mathds{T}(\u_{f},p)=-p\mathds{I}+2\nu\mathds{D}(\u_{f}), \ \ \ \mathds{D}(\u_{f})=\frac12(\nabla \u_{f}+\nabla^{\text{tr}} \u_{f}) \footnotemark[1].
\end{align*}
\footnotetext[1]{$\nabla^{\text{tr}} \u_{f}$ means the transpose of the tensor $\nabla \u_{f}$.}
\noindent $\nu>0$ is the kinetic viscosity, $\mathds{I}$ represents the identity matrix and $\mathbf{F}_{1}$ is the external force. The velocity $\u_{p}(\x,t)$ and hydraulic head $\phi(\x,t)$ in porous media region are governed by the Darcy's law and the saturated flow model
\begin{align}
\u_{p} =-\mathbf{K}\nabla\phi & \ \ \ \ \ \ \ \ \text{in\ } \Omega_p\times(0,T),  \label{eq:DarcyLaw}\\
S_0 \frac{\partial\phi}{\partial t} +\nabla\cdot \u_{p} = \rm{F}_{2}(\x,t) & \ \ \ \ \ \ \ \ \text{in\ }\Omega_p\times(0, T),  \label{eq:SaturatedModel}\\
\phi(\x,0) =\phi^0(\x) & \ \ \ \ \ \ \ \ \text{in\ }\Omega_p,  \notag
\end{align}
where positive symmetric tensor $\mathbf{K}$ denotes the hydraulic conductivity in $\Omega_p$ and is allowed to vary in space. $S_0$ is the specific mass storativity coefficient and $\rm{F}_{2}$ is a source term.
Combining \eqref{eq:DarcyLaw} and \eqref{eq:SaturatedModel}, we obtain the Darcy equation which describes the hydraulic head:
\begin{align} \label{eq:DarcyEq}
S_0 \frac{\partial\phi}{\partial t} - \nabla\cdot(\mathbf{K}\nabla\phi)=\rm{F}_{2}(x,t), \ \ \ \ \ \ \ \ \text{in\ }\Omega_p\times(0,T),
\end{align}
Now we introduce the boundary conditions:
\begin{gather}
\u_{f} = 0 \ \ \ \text{on\ } \left(\partial\Omega_f\setminus\Gamma \right) \times(0,T) , \notag \\
\ \phi=0 \ \ \ \text{on\ } \left( \partial\Omega_p\setminus \Gamma \right) \times(0,T),  \label{eq:BoundaryCondi}
\end{gather}
and the necessary interface conditions for the coupled Stokes/Darcy model:
\begin{align}
\u_{f} \cdot \n_f - \mathbf{K}\nabla\phi\cdot \n_p &= 0,   \ \ \ \ \ \ \ \ \ \ \ \ \ \ \ \ \ \ \ \ \ \ \ \ \ \ \ \ \ \text{on\ } \Gamma\times(0,T), \notag\\
- \n_f \cdot \Big( \mathds{T}(\u_{f},p)\cdot \n_f \Big) &= g\phi,  \ \ \ \ \ \ \ \ \ \ \ \ \ \ \ \ \ \ \ \ \ \ \ \ \ \ \ \text{on\ } \Gamma\times(0,T),  \label{eq:BJScondi}\\
- \t_i \cdot \Big( \mathds{T}(\u_{f},p)\cdot \n_f \Big) &= \frac{\mu_{BJS}\nu\sqrt{d}}{\sqrt{\text{trace}(\Pi)}}  \t_i \cdot \u_{f},
\ \ \ i = 1, \cdots d-1 \ \ \ \ \ \ \text{on\ } \Gamma\times(0,T),  \notag
\end{align}
where $g$ is the gravitational constant and $\{\t_i\}_{i=1}^{d-1}$ are the orthonormal system of tangential vectors along $\Gamma$. $\mu_{BJS}$ is an experimentally determined parameter. $\Pi$ represents the permeability and satisfies $\mathbf{K}=\frac{\Pi g}{\nu}$.

For weak formulation of the unsteady Stokes/Darcy model, we define some function spaces:
\begin{align*}
H_{f} &= \{ \vv \in \left(H^1(\Omega_f) \right)^{d}: \vv|_{\Omega_f \setminus \Gamma}=0\}, \\
H_{p} &= \{ \psi \in H^1(\Omega_p): \psi |_{\Omega_p \setminus \Gamma}=0\} , \\
\U &= H_f \times H_p , \\
Q_f &=L^2(\Omega_f).
\end{align*}
We associate the space $\U$ with the following two norms: for all $\vl=(\vv, \psi)\in \U$
\begin{align*}
\Vert \vl \Vert_0 &=\sqrt{(\vv,\vv)_{\Omega_f}+gS_0(\psi,\psi)_{\Omega_p}},\\
\Vert \vl \Vert_{\U} &=\sqrt{\nu(\nabla \vv,\nabla \vv)_{\Omega_f} + g (\mathbf{K} \nabla \psi,\nabla \psi)_{\Omega_p}},
\end{align*}
where $\left( \cdot , \cdot  \right)_{\Omega}$ denotes the $L^{2}$-inner product on function space $L^{2}(\Omega)$. 
By positive definiteness of tensor $\mathbf{K}$ and Poincar\'e inequality, there exists constant $C_{0,\U} > 0$ such that
\begin{align} \label{eq:EquiNorm}
\Vert \vl \Vert_0 \leq C_{0,\U} \Vert \vl \Vert_{\U}.
\end{align}
For convenience, we denote $\Vert \cdot \Vert$ and $\Vert \cdot \Vert_{k}$ are norms of $L^{2}$ space and Sobolev space $H^{k}$ respectively.

Now we combine \eqref{eq:EqStokes}, \eqref{eq:DarcyEq}, \eqref{eq:BoundaryCondi} and \eqref{eq:BJScondi} to derive the weak form of time dependent Stokes/Darcy model: given
$\mathbf{F} = (\mathbf{F}_{1},  \rm{F}_{2}) \in L^2 \left(0,T; \left(L^{2}(\Omega_f)\right)^{d} \right) \times L^2 \left(0,T;  L^2(\Omega_p) \right)$, find
$\ul(t) = (\u_{f}(t), \phi (t)) \in \U $ and $p(t) \in Q_{f}$ such that for all $ \vl = (\vv, \psi) \in \U$, $q \in Q_{f}$ and any time $t \in (0,T]$
\begin{gather}
\Big<\frac{\partial \ul}{\partial t},\vl \Big>_{0}+a(\ul,\vl)+b(\vl,p)= \left<\F,\vl \right>_{\U'}, \notag \\
b(\ul,q)=0,   \label{eq:WeakSD}\\
\ul(\x,0)=\ul^0,   \notag
\end{gather}
where
\begin{align*}
\Big<\frac{\partial\ul}{\partial t},\vl \Big>_{0} &= \left(\frac{\partial \u_{f}}{\partial t},\vv \right)_{\Omega_f}+ gS_0 \left(\frac{\partial\phi}{\partial t},\psi \right)_{\Omega_p},\\
a(\ul,\vl) &= a_{\Omega}(\ul,\vl)+a_\Gamma(\ul,\vl),\\
a_{\Omega}(\ul,\vl) &= a_{\Omega_f}(\u,\vv)+a_{\Omega_p}(\phi,\psi),\\
a_{\Omega_f}(\u,\vv) &= \nu \left(\mathds{D}(\u),\mathds{D}(\vv) \right)_{\Omega_f}+ \left( \frac{\mu_{BJS} \nu\sqrt{d}}{\sqrt{\text{trace}(\Pi)}} P_\t(\u), \vv \right)_\Gamma,\\
a_{\Omega_p}(\phi,\psi) &= g(\mathbf{K}\nabla \phi,\nabla \psi)_{\Omega_p},\\
a_\Gamma(\ul,\vl) &= g \left( \phi,\vv\cdot \n_s \right)_\Gamma - g \left(\psi,\u_{f} \cdot \n_s \right)_\Gamma,\\
b(\vl,p) &= -(p,\nabla\cdot \vv)_{\Omega_f},\\
\left< \mathbf{F},\vl \right>_{\U'} &= (\mathbf{F}_{1},\vv)_{\Omega_f}+g(\rm{F}_{2},\psi)_{\Omega_p}, \\
\ul^0 &= (\u_{f}^0(\x),\phi^0(\x)).
\end{align*}
$\U'$ is the dual space of $\U$ with the norm
\begin{align*}
\Vert \F \Vert_{\U'} = \sup_{\vv \in \U \backslash \{0\}} \frac{\left< \mathbf{F},\vl \right>_{\U'}}{\Vert \vl \Vert_{\U}},
\end{align*}
and $P_\t(\cdot)$ is the projection onto the local tangential plane, i.e. $P_\t(\vv)=\vv-(\vv\cdot\n_s)\n_s$.
The bilinear form $a(\cdot, \cdot)$ is continuous and coercive: for all $\ul,\vl\in \U$,
\begin{align}
a(\ul,\vl) & \leq C_{1} \|\ul\|_U\|\vl\|_U,    \notag\\
a(\ul,\ul) & \geq C_{2} \|\ul\|_U^2.  \label{eq:aEstimator}
\end{align}
Here the above constants $C_{1},C_{2}> 0$ are independent of functions.
\section{Preliminaries} \label{sec:preliminaries}
For spatial discretization, we construct regular triangulations of $\Omega_f$ and $\Omega_p$ with diameter $h >0$ and choose any finite element spaces $H_{fh} \subset H_f, Q_{fh} \subset Q_f , H_{ph} \subset H_p $
such that the pair $\left( H_{fh}, Q_{fh} \right)$ satisfies the discrete $LBB^{h}$ condition. Typical examples of such pair include Taylor-Hood (P2-P1) and MINI (P1b-P1). Then we define $\U_h= H_{fh} \times H_{ph}$ to be
finite element space of $\U$. The discretely divergence free subspace of $H_{fh}$ is defined to be
\begin{align*}
V_{fh} & := \left\{ \vv_{h} \in H_{fh} : \left( \nabla \cdot \vv_{h}, q_{h} \right) = 0, \ \forall q_{h} \in Q_{fh} \right\},
\end{align*}
and the divergence free space of $\U_{h}$ to be $\V_{h} = V_{fh} \times H_{ph}$.

We define the linear projection operator (see \cite{mu2010SDdecoupledmixed}) $P_h = \left(P_h^{\ul}, P_h^{p} \right)$ from $\U\times Q_f$ onto $\U_h\times Q_{fh}$ :
given $t\in(0,T]$ and $\left(\ul(t),p(t)\right) \in \left(\U, Q_{f} \right)$, $\left( P_h^{\ul}\u(t), P_h^{p}p(t) \right)$ satisfies
\begin{gather}
a(P_h^{\ul}\ul(t),\vl_h)+b(\vl_h,P_h^{p}p(t))=a(\ul(t),\vl_h)+b(\vl_h,p(t)), \notag \\
b(P_h^{\ul}\ul(t),q_{h})=0.    \label{projection}
\end{gather}
for all $\vl_h\in\U_h$, $q_{h}\in Q_{fh}$. and the linear projection $P_h$ defined above satisfies
\begin{align}
\Vert P_h^{\ul}\ul(t)-\ul(t) \Vert_0 &\leq C_{3}h^2 \Vert \ul(t) \Vert_{2}, \notag\\
\Vert P_h^{\ul}\ul(t)-\ul(t) \Vert_U &\leq C_{4}h \Vert\ul(t) \Vert_{2},   \notag  \\
\Vert P_h^{p}p(t)-p(t) \Vert &\leq C_{5}h \Vert p(t) \Vert_{1}.  \label{eq:ProjProperty}
\end{align}
if the pair $\left( \ul(t),p(t)\right)$ is smooth enough.

For the rest of the paper, $P = \{t_{n}\}_{n=0}^{N}$ is the partition on time interval $[0,T]$ with $t_{0} = 0, t_{N} = T$ and $k_{n} = t_{n+1} - t_{n}$ is the time stepsize.
Let $\ul_{h}^{n}, p_{h}^{n}$ denote the approximate solutions of $\ul(t_{n}), p(t_{n})$ by the DLN method \eqref{eq:DLNscheme} and for convenience, we denote
\begin{align*}
\ul_{h,\beta}^{n} &= \beta_{2,n} \ul_{h}^{n+1}+\beta_{1,n} \ul_h^{n}+\beta_{0,n} \ul_h^{n-1}, \\
\F_{\beta}^{n} &= \beta_{2,n} \F(t_{n+1})+\beta_{1,n} \F(t_{n})+\beta_{0,n} \F(t_{n-1}).
\end{align*}
Then we have the discrete weak formulation for the unsteady Stokes/Darcy model by variable timestepping DLN algorithm:
given $\ul_h^{n}$, $\ul^{n-1}_h$ and $p_h^{n}$, $p^{n-1}_h$, find $\ul_{h}^{n+1}, p_{h}^{n+1}$ such that for all $\vl_h \in \U_{h}$ and $q_h\in Q_{fh}$,
\begin{gather}
\left<\frac{\alpha_2\ul_h^{n+1}+\alpha_1\ul_h^{n}+\alpha_0\ul_h^{n-1}}{\alpha_2k_n-\alpha_0k_{n-1}},\vl_h \right>_{0} + a(\ul_{h,\beta}^{n},\vl_h)+b(\vl_h,p^{n}_{h,\beta})=\left<\F^{n}_{\beta},\vl_h \right>_{\U'}, \notag \\
b(\ul_h^{n+1},q_h)=0.   \label{eq:DLNSDWeakForm1}
\end{gather}
Under $LBB^{h}$ condition, \eqref{eq:DLNSDWeakForm1} has equivalent form: for all $\vl_h \in \V_{h}$
\begin{align}\label{eq:DLNSDWeakForm2}
\left< \frac{\alpha_2 \ul_h^{n+1}+\alpha_1 \ul_h^{n}+\alpha_0 \ul_h^{n-1}}{\alpha_2k_n-\alpha_0k_{n-1}},\vl_h \right>_{0} + a(\ul_{h,\beta}^{n},\vl_h ) = \left< \F^{n}_{\beta},\vl_h \right>_{\U'}.
\end{align}
Before proceeding to next section, we propose two lemmas about the DLN method needed for stability and error analysis.
\begin{lemma} \label{lemma:GstabDLN}
The $DLN$ scheme \eqref{eq:DLNscheme} under variable timestep is G-stable, i.e. for any $n = 1,2,..., N-1$, there exist real numbers $\lambda_{j,n} \ (j = 0,1,2) $  such that
\begin{align*}
\left(\sum_{j=0}^{2} {\alpha_{j}} x_{n-1+j}, \sum_{j=0}^{2} {\beta_{j,n}} x_{n-1+j} \right) = \begin{Vmatrix} {x_{n+1}} \\ {x_{n}} \end{Vmatrix}^{2}_{G(\theta)}-\begin{Vmatrix} {x_{n}} \\ {x_{n-1}} \end{Vmatrix}^{2}_{G(\theta)}+ \left\Vert \sum_{j=0}^{2} {\lambda_{j,n}} x_{n-1+j} \right\Vert ^{2} .
\end{align*}
Here the $G(\theta)$-norm $\Vert \cdot \Vert_{G \left( \theta \right)}$ (timestep independent norm) is
\begin{align} \label{eq:defGnorm}
\begin{Vmatrix} y \\ z \end{Vmatrix}^{2}_{G(\theta)} := {\frac{1}{4}}(1 + {\theta}) \left\Vert y \right\Vert^{2} + {\frac{1}{4}}(1 - {\theta}) \left\Vert z \right\Vert^{2},
\end{align}
for any $y,z \in \mathbb{R}^{d}$ and the coefficients $\lambda_{j,n}$ in numerical dissipation are
\begin{align*}
\lambda_{1,n} = - \frac{\sqrt{\theta \left( 1 - {\theta}^{2} \right)}}{\sqrt{2} \left( 1 + \epsilon_n \theta \right)} , \ \ \ \lambda_{2,n} = - \frac{1 - \epsilon_n}{2} \lambda_{1,n} , \ \ \ \lambda_{0,n} = - \frac{1 + \epsilon_n}{2} \lambda_{1,n}.
\end{align*}
\end{lemma}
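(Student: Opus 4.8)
The plan is to reduce the stated identity, for each fixed $n$, to a handful of scalar polynomial identities in $\theta\in[0,1]$ and $\epsilon_n\in(-1,1)$ and to verify those directly. The left-hand side, the telescoping difference of $G(\theta)$-norms, and the dissipation term are all real quadratic forms in the three vectors $x_{n-1},x_n,x_{n+1}$, so the equality holds if and only if the six coefficients of $\|x_{n+1}\|^2,\ \|x_n\|^2,\ \|x_{n-1}\|^2,\ (x_{n+1},x_n),\ (x_{n+1},x_{n-1}),\ (x_n,x_{n-1})$ agree on both sides. Using $\|(x_{n+1},x_n)\|_{G(\theta)}^2-\|(x_n,x_{n-1})\|_{G(\theta)}^2=\tfrac{1+\theta}{4}\|x_{n+1}\|^2-\tfrac{\theta}{2}\|x_n\|^2-\tfrac{1-\theta}{4}\|x_{n-1}\|^2$, expanding $\|\sum_j\lambda_{j,n}x_{n-1+j}\|^2=\sum_{i,j}\lambda_{i,n}\lambda_{j,n}(x_{n-1+i},x_{n-1+j})$, and noting that the left-hand bilinear form has symmetrized coefficients $\tfrac12(\alpha_i\beta_{j,n}+\alpha_j\beta_{i,n})$, the lemma is equivalent to
\begin{gather*}
\alpha_2\beta_{2,n}=\tfrac{1+\theta}{4}+\lambda_{2,n}^2,\qquad
\alpha_1\beta_{1,n}=-\tfrac{\theta}{2}+\lambda_{1,n}^2,\qquad
\alpha_0\beta_{0,n}=-\tfrac{1-\theta}{4}+\lambda_{0,n}^2,\\
\alpha_2\beta_{1,n}+\alpha_1\beta_{2,n}=2\lambda_{2,n}\lambda_{1,n},\qquad
\alpha_2\beta_{0,n}+\alpha_0\beta_{2,n}=2\lambda_{2,n}\lambda_{0,n},\qquad
\alpha_1\beta_{0,n}+\alpha_0\beta_{1,n}=2\lambda_{1,n}\lambda_{0,n}.
\end{gather*}

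To make this tractable I would introduce $\gamma_n:=\dfrac{1-\theta^2}{(1+\epsilon_n\theta)^2}$, so that $\beta_{2,n}=\tfrac14(1+\theta+\gamma_n+\epsilon_n^2\theta\gamma_n)$, $\beta_{1,n}=\tfrac12(1-\gamma_n)$, $\beta_{0,n}=\tfrac14(1-\theta+\gamma_n-\epsilon_n^2\theta\gamma_n)$, while $\lambda_{1,n}^2=\tfrac{\theta\gamma_n}{2}$ and hence $\lambda_{2,n}^2=\tfrac{(1-\epsilon_n)^2}{8}\theta\gamma_n$, $\lambda_{0,n}^2=\tfrac{(1+\epsilon_n)^2}{8}\theta\gamma_n$, $\lambda_{2,n}\lambda_{1,n}=-\tfrac{1-\epsilon_n}{4}\theta\gamma_n$, $\lambda_{1,n}\lambda_{0,n}=-\tfrac{1+\epsilon_n}{4}\theta\gamma_n$, $\lambda_{2,n}\lambda_{0,n}=\tfrac{1-\epsilon_n^2}{8}\theta\gamma_n$. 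Substituting these together with $\alpha_2=\tfrac{1+\theta}{2}$, $\alpha_1=-\theta$, $\alpha_0=\tfrac{\theta-1}{2}$ (so that $\alpha_0+\alpha_1+\alpha_2=0$ and $\beta_{0,n}+\beta_{1,n}+\beta_{2,n}=1$), and then using $1-\theta^2=\gamma_n(1+\epsilon_n\theta)^2$ to clear the denominator, each of the six relations collapses, after cancelling the common factor $\gamma_n$, to an elementary polynomial identity in $\theta$ and $\epsilon_n$: for example the first becomes $(1+\theta)(1+\epsilon_n^2\theta)-(1-\epsilon_n)^2\theta=(1+\epsilon_n\theta)^2$, which is immediate, and the second is trivial since $\alpha_1\beta_{1,n}=-\tfrac{\theta}{2}(1-\gamma_n)=-\tfrac{\theta}{2}+\lambda_{1,n}^2$. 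Once the six identities are in hand the right-hand side of the lemma equals the left-hand side, and since its last term is a genuine squared $L^2$-norm it is nonnegative; this is precisely the $G$-stability of the DLN step in the timestep-independent norm $\|\cdot\|_{G(\theta)}$ of \eqref{eq:defGnorm}.

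The proof is essentially bookkeeping, and the delicate part is the three cross-term identities, in which the factor $\epsilon_n^2\theta\gamma_n$ sitting inside $\beta_{2,n}$ and $\beta_{0,n}$ must be carried through correctly. A useful shortcut is to phrase the verification in matrix form: with $M=\bigl[\tfrac12(\alpha_i\beta_{j,n}+\alpha_j\beta_{i,n})\bigr]_{i,j=0}^{2}$, $D=\mathrm{diag}\bigl(-\tfrac{1-\theta}{4},-\tfrac{\theta}{2},\tfrac{1+\theta}{4}\bigr)$ and $\lambda=(\lambda_{0,n},\lambda_{1,n},\lambda_{2,n})^{\text{tr}}$, the easily checked facts $M\mathbf{1}=D\mathbf{1}=\tfrac12(\alpha_0,\alpha_1,\alpha_2)^{\text{tr}}$ and $\lambda\lambda^{\text{tr}}\mathbf{1}=0$ (because $\sum_j\lambda_{j,n}=0$) show that $M-D-\lambda\lambda^{\text{tr}}$ is a symmetric $3\times3$ matrix with vanishing row sums, hence is determined by its three off-diagonal entries; so only the three cross-term identities actually need to be verified, the three diagonal ones then following automatically. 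I expect this computation, not any conceptual issue, to be the only real obstacle.
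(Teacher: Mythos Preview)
Your approach is correct and, unlike the paper, actually constitutes a proof: the paper's ``proof'' of this lemma is a bare citation to \cite{2020arXiv200108640L}, so there is no in-paper argument to compare against. Your reduction to six scalar identities via the symmetrized coefficients $\tfrac12(\alpha_i\beta_{j,n}+\alpha_j\beta_{i,n})$ is the natural route, and the substitution $\gamma_n=(1-\theta^2)/(1+\epsilon_n\theta)^2$ cleans things up exactly as you claim; I spot-checked the first diagonal identity and the $(2,1)$ cross-term and both collapse as you describe. The row-sum shortcut is a nice touch: because $\sum_j\alpha_j=0$, $\sum_j\beta_{j,n}=1$ and $\sum_j\lambda_{j,n}=0$, the symmetric matrix $M-D-\lambda\lambda^{\text{tr}}$ has zero row sums and is therefore determined by its three off-diagonal entries, so the three diagonal checks are indeed redundant.

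One caution: you use $\alpha_1=-\theta$, whereas the paper's displayed coefficient vector lists $\alpha_1=\theta$. Your sign is the correct one (and the paper itself relies on $\alpha_0+\alpha_1+\alpha_2=0$ later in the error analysis, which forces $\alpha_1=-\theta$), so this is a typo in the paper rather than an error on your part; just flag it if you write this up.
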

\begin{proof}
See \cite{2020arXiv200108640L}.
\end{proof}
\ \\

\begin{lemma} \label{lemma:2ndDLN}
Let $y : \Omega \times [0,T] \to \mathbb{R}^{d} $ be smooth enough, then
\begin{align*}
\left\Vert \sum_{j=0}^{2} \beta_{j,n}y(t_{n-1+j}) - y \left( t_{n,\beta} \right)  \right\Vert^{2} \leq C \left( k_{n} + k_{n-1} \right)^{3}  \int_{t_{n-1}}^{t_{n+1}} \left\Vert y_{tt} \right\Vert^{2} dt ,
\end{align*}
and for $\theta \in [0,1)$
\begin{align*}
\left\Vert \frac{{\alpha_{2}}{y(t_{n+1})} + {\alpha_{1}}{y(t_{n})} + {\alpha_{0}}{y(t_{n-1})}}{\alpha_2k_n-\alpha_0k_{n-1}} - y_{t} \left( t_{n,\beta} \right) \right\Vert^{2} \leq C \left( \theta \right) \left( k_{n} + k_{n-1} \right)^{3} \int_{t_{n-1}}^{t_{n+1}} \Vert y_{ttt} \Vert^{2} dt ,
\end{align*}
where $t_{n,\beta}=\beta_{2,n}t_{n+1}+\beta_{1,n}t_n+\beta_{0,n} t_{n-1}$.
\end{lemma}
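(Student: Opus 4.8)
The plan is to prove both estimates by Taylor expansion about the point $t_{n,\beta}$, exploiting the fact that the DLN coefficients $\{\beta_{j,n}\}$ and $\{\alpha_j\}$ are, by construction, consistency-calibrated so that the leading-order error terms cancel. I would begin with the first inequality. Write each $y(t_{n-1+j})$ using a Taylor expansion with integral remainder centered at $t_{n,\beta}$, say $y(t_{n-1+j}) = y(t_{n,\beta}) + (t_{n-1+j}-t_{n,\beta})\,y_t(t_{n,\beta}) + R_j$, where $R_j = \int_{t_{n,\beta}}^{t_{n-1+j}} (t_{n-1+j}-s)\,y_{tt}(s)\,ds$. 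Summing against $\beta_{j,n}$ and using the two identities $\sum_j \beta_{j,n} = 1$ and $\sum_j \beta_{j,n} t_{n-1+j} = t_{n,\beta}$ (the latter being the definition of $t_{n,\beta}$), the zeroth- and first-order terms drop out, leaving $\sum_j \beta_{j,n} y(t_{n-1+j}) - y(t_{n,\beta}) = \sum_j \beta_{j,n} R_j$. Then I would bound each $R_j$ by Cauchy--Schwarz: $\|R_j\| \le |t_{n-1+j}-t_{n,\beta}|^{3/2}\big(\int_{t_{n-1}}^{t_{n+1}} \|y_{tt}\|^2\,dt\big)^{1/2}$, observe that each $|t_{n-1+j}-t_{n,\beta}| \le C(k_n + k_{n-1})$ and that the $\beta_{j,n}$ are uniformly bounded for $\epsilon_n \in (-1,1)$ and $\theta \in [0,1]$, square, and collect constants to obtain the claimed $(k_n+k_{n-1})^3$ factor.

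For the second inequality the structure is the same but one order higher: the DLN quotient $\big(\alpha_2 y(t_{n+1}) + \alpha_1 y(t_n) + \alpha_0 y(t_{n-1})\big)/(\alpha_2 k_n - \alpha_0 k_{n-1})$ is a second-order consistent approximation of $y_t(t_{n,\beta})$. I would Taylor-expand $y(t_{n-1+j})$ to second order about $t_{n,\beta}$ with integral remainder involving $y_{ttt}$, substitute into the numerator, and use the algebraic identities satisfied by $\alpha_0,\alpha_1,\alpha_2$ together with the specific choice of the ``average step'' $\alpha_2 k_n - \alpha_0 k_{n-1}$ and the formula for $t_{n,\beta}$ in terms of $\epsilon_n$ and $\theta$. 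These are precisely the relations that force the constant, linear, and quadratic terms in the numerator to reduce to $(\alpha_2 k_n - \alpha_0 k_{n-1})\,y_t(t_{n,\beta})$ plus a pure third-derivative remainder; this is the content of the method being second order. After that cancellation, divide by $\alpha_2 k_n - \alpha_0 k_{n-1}$, bound the remainder terms by Cauchy--Schwarz as before, and use that $\alpha_2 k_n - \alpha_0 k_{n-1} \ge c\,(k_n + k_{n-1})$ with $c = c(\theta) > 0$ for $\theta \in [0,1)$ (this is where the restriction $\theta < 1$ enters, since $\alpha_2 k_n - \alpha_0 k_{n-1} = \tfrac{\theta+1}{2}k_n + \tfrac{1-\theta}{2}k_{n-1}$ stays bounded below by a positive multiple of $k_n + k_{n-1}$ only when $1-\theta > 0$, or more precisely one needs both coefficients controlled). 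Squaring produces the $(k_n+k_{n-1})^3$ and the $\theta$-dependent constant $C(\theta)$.

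The routine part is the Taylor bookkeeping and the Cauchy--Schwarz estimates; the genuine obstacle is verifying the algebraic cancellations, i.e.\ confirming that with the given closed-form expressions for $\beta_{j,n}(\epsilon_n,\theta)$ and $\alpha_j(\theta)$ one really has $\sum_j \beta_{j,n} t_{n-1+j} = t_{n,\beta}$ automatically (this is a definition, so it is free) and, more substantively, that $\sum_j \alpha_j (t_{n-1+j}-t_{n,\beta}) = \alpha_2 k_n - \alpha_0 k_{n-1}$ and $\sum_j \alpha_j (t_{n-1+j}-t_{n,\beta})^2 = 0$. These are the two moment conditions encoding first- and second-order consistency of the one-leg DLN method; they can be checked by direct substitution, writing $t_{n+1}-t_{n,\beta}$ and $t_{n-1}-t_{n,\beta}$ in terms of $k_n$, $k_{n-1}$, $\theta$, and the $\beta_{j,n}$, and simplifying. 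I would either carry out this verification explicitly or cite the consistency analysis of the DLN method from \cite{DLNStabilityofTwoStepMethods, 2020arXiv200108640L}, since the coefficient formulas reproduced in the introduction were designed exactly so that these identities hold. Once the moment conditions are in hand, the rest of the proof is mechanical and uniform bounds on $\beta_{j,n}$ over the compact parameter ranges finish both estimates.
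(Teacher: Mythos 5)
Your proposal is correct and follows essentially the same route as the paper, whose proof is only a one-line sketch: Taylor expansion with integral remainder (the paper centers the expansions at $t_n$, you at $t_{n,\beta}$ --- an immaterial difference) combined with the DLN consistency/moment identities and Cauchy--Schwarz. The identities you isolate do hold (with $\alpha_1=-\theta$; the coefficient table in the introduction has a sign typo, as the paper's later use of $\alpha_0+\alpha_1+\alpha_2=0$ confirms), though your ``uniform'' bound on the $\beta_{j,n}$ is really a $\theta$-dependent bound, since $\beta_{1,n}$ degenerates as $\theta\to 1$ with $\epsilon_n\to -1$, which is consistent with the constants $C(\theta)$ and the restriction $\theta\in[0,1)$ in the statement.
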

\begin{proof}
Apply Taylor theorem with integral reminder to $y(t_{n+1})$, $y(t_{n-1})$ and $y \left( t_{n,\beta} \right)$ and expand these functions at point $t_{n}$.
\end{proof}

\section{Variable timestepping Analysis for the Unsteady Stokes/Darcy Model} \label{sec:StabErrorAnalysis}
Now we apply $G$-stability of the DLN method (Lemma \ref{lemma:GstabDLN}) and have the following theorem about stability of approximate solutions by variable timestepping DLN algorithm.
\begin{theorem}  (Unconditional Stability)
\label{theorem:SDStabDLN}
For any $2 \leq M \leq N$, the approximate solutions of the unsteady Stokes/Darcy model by the algorithm \eqref{eq:DLNSDWeakForm2} satisfy
\begin{gather}
\frac{1}{4}(1+\theta)\left\Vert \ul_{h}^{M} \right\Vert^2_0 + \frac{1}{4}(1-\theta)\left\Vert \ul_{h}^{M-1} \right\Vert^2_0+\sum_{n=1}^{M-1} \left\Vert \sum_{j=0}^{2} \lambda_{j,n}\ul_{h}^{n-1+j} \right\Vert^2_0
+C(\theta) \sum_{n=1}^{M-1}(k_n+k_{n-1})\left\Vert \ul_{h,\beta}^{n} \right\Vert_U^2 \notag \\
\leq  \frac{1}{4}(1+\theta) \left\Vert \ul_{h}^{1} \right\Vert^2_0+\frac{1}{4}(1-\theta)\left\Vert \ul_{h}^{0} \right\Vert^2_0 + \widetilde{C}(\theta)\sum_{n=1}^{N-1}(k_n+k_{n-1}) \left\Vert \F^{n}_{\beta} \right\Vert_{U'}^2. \label{eq:SDsoluStab}
\end{gather}
Here, the constants $C(\theta), \widetilde{C}(\theta) \geq 0$ are independent of the diameter $h$ and time stepsize $k_n$.
\end{theorem}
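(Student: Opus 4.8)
The plan is to test the reduced, divergence-free weak formulation \eqref{eq:DLNSDWeakForm2} with the midpoint-type function $\vl_h = \ul_{h,\beta}^{n} \in \V_h$, apply the $G$-stability identity of Lemma \ref{lemma:GstabDLN} to the discrete time-derivative term, bound the bilinear form $a(\cdot,\cdot)$ from below by coercivity, absorb the forcing term by a dual-norm/Young estimate, and then multiply through by the positive weight $\alpha_2 k_n - \alpha_0 k_{n-1}$ and sum a telescoping series over $n = 1,\dots,M-1$.

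Concretely: setting $\vl_h = \ul_{h,\beta}^{n}$ in \eqref{eq:DLNSDWeakForm2} and applying Lemma \ref{lemma:GstabDLN} with the inner product $\langle\cdot,\cdot\rangle_0$ (so that its generic norm is $\|\cdot\|_0$) to the numerator $\alpha_2\ul_h^{n+1}+\alpha_1\ul_h^{n}+\alpha_0\ul_h^{n-1}$, then multiplying the resulting identity by $\alpha_2 k_n - \alpha_0 k_{n-1}>0$, gives
\begin{align*}
\begin{Vmatrix} \ul_{h}^{n+1} \\ \ul_{h}^{n} \end{Vmatrix}^{2}_{G(\theta)} - \begin{Vmatrix} \ul_{h}^{n} \\ \ul_{h}^{n-1} \end{Vmatrix}^{2}_{G(\theta)} &+ \Bigl\Vert \sum_{j=0}^{2}\lambda_{j,n}\ul_{h}^{n-1+j}\Bigr\Vert_{0}^{2} + \bigl(\alpha_2 k_n - \alpha_0 k_{n-1}\bigr)\, a(\ul_{h,\beta}^{n},\ul_{h,\beta}^{n}) \\
&= \bigl(\alpha_2 k_n - \alpha_0 k_{n-1}\bigr)\,\langle \F_{\beta}^{n},\ul_{h,\beta}^{n}\rangle_{U'},
\end{align*}
where the $G(\theta)$-norm is \eqref{eq:defGnorm} read with $\|\cdot\|_0$. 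Coercivity in \eqref{eq:aEstimator} bounds $a(\ul_{h,\beta}^{n},\ul_{h,\beta}^{n}) \geq C_2\|\ul_{h,\beta}^{n}\|_U^2$, while $\langle\F_\beta^n,\ul_{h,\beta}^n\rangle_{U'} \leq \|\F_\beta^n\|_{U'}\|\ul_{h,\beta}^n\|_U \leq \tfrac{C_2}{2}\|\ul_{h,\beta}^n\|_U^2 + \tfrac{1}{2C_2}\|\F_\beta^n\|_{U'}^2$; substituting, a term $\tfrac{C_2}{2}(\alpha_2 k_n - \alpha_0 k_{n-1})\|\ul_{h,\beta}^n\|_U^2$ survives on the left and $\tfrac{1}{2C_2}(\alpha_2 k_n - \alpha_0 k_{n-1})\|\F_\beta^n\|_{U'}^2$ on the right.

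It then remains to sum over $n=1,\dots,M-1$: the two $G(\theta)$-norm terms telescope to $\|(\ul_h^{M},\ul_h^{M-1})\|_{G(\theta)}^2 - \|(\ul_h^{1},\ul_h^{0})\|_{G(\theta)}^2$, which, once \eqref{eq:defGnorm} is expanded, produces exactly the four boundary terms of \eqref{eq:SDsoluStab}. To get the clean factors $(k_n+k_{n-1})$ I would use $\alpha_2=\tfrac{\theta+1}{2}$, $\alpha_0=\tfrac{\theta-1}{2}$, which give $\tfrac{1-\theta}{2}(k_n+k_{n-1}) \leq \alpha_2 k_n - \alpha_0 k_{n-1} \leq \tfrac{1+\theta}{2}(k_n+k_{n-1})$; applying the lower bound to the surviving viscous term on the left (with $C(\theta)=\tfrac{C_2(1-\theta)}{4}$) and the upper bound to the forcing term on the right (with $\widetilde C(\theta)=\tfrac{1+\theta}{4C_2}$), and finally enlarging the nonnegative sum $\sum_{n=1}^{M-1}$ to $\sum_{n=1}^{N-1}$ on the forcing side, yields \eqref{eq:SDsoluStab}.

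The computations are all routine; the only point needing a little attention is the weight equivalence above and the degenerate case $\theta=1$, where $C(1)=0$ and the $\|\ul_h^{M-1}\|_0$ boundary coefficient vanishes — the estimate then reduces to the pure $G$-stability bound and is still valid, consistent with the hypothesis $C(\theta),\widetilde C(\theta)\geq 0$, so no genuine obstacle arises. If one wants a strictly positive, uniform-in-$\theta$ control of the viscous term one restricts to $\theta\in[0,1)$, matching the hypothesis of Lemma \ref{lemma:2ndDLN}.
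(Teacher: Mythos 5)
Your proposal is correct and follows essentially the same route as the paper's own proof: testing \eqref{eq:DLNSDWeakForm2} with $\vl_h=\ul_{h,\beta}^{n}$, multiplying by $\alpha_2 k_n-\alpha_0 k_{n-1}$, invoking Lemma \ref{lemma:GstabDLN} in the $\langle\cdot,\cdot\rangle_0$ inner product, using coercivity \eqref{eq:aEstimator} with a Cauchy--Schwarz/Young split of the forcing term, applying the bound \eqref{eq:EstAvestep}, and telescoping the $G(\theta)$-norm terms, with the same constants $C(\theta)=\tfrac{C_2(1-\theta)}{4}$ and $\widetilde C(\theta)=\tfrac{1+\theta}{4C_2}$. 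Your extra remark on the degenerate case $\theta=1$ is a minor refinement not spelled out in the paper but fully consistent with it.
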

\begin{proof}
Let $\vl_h=\ul_{h,\beta}^{n}$ in \eqref{eq:DLNSDWeakForm2} and multiply both sides of the equation by $\alpha_2 k_n-\alpha_0 k_{n-1}$,
\begin{align} \label{eq:SDStabstep1}
\left< \sum_{j=0}^{2}\alpha_j \ul_h^{n-1+j},\sum_{j=0}^{2}\beta_{j,n} \ul_{h}^{n-1+j} \right>_{0}+\left(\alpha_2 k_n-\alpha_0 k_{n-1} \right)a(\ul_{h,\beta}^{n},\ul_{h,\beta}^{n})
=\left(\alpha_2 k_n-\alpha_0 k_{n-1}\right) \left<\F_{\beta}^{n},\ul_{h,\beta}^{n} \right>_{\U'}.
\end{align}
Using Lemma \ref{lemma:GstabDLN} for \eqref{eq:SDStabstep1} and replacing $L^{2}$ space by $\U$ and $L^{2}$-norm by $\Vert \cdot \Vert_{0}$ norm, we obtain
\begin{align} \label{eq:SDGstabterm}
\left< \sum_{j=0}^{2}\alpha_j \ul_h^{n-1+j},\sum_{j=0}^{2}\beta_{j,n} \ul_{h}^{n-1+j} \right>_{0} = \begin{Vmatrix} {\ul_{h}^{n+1}} \\ {\ul_{h}^{n}} \end{Vmatrix}^{2}_{G(\theta)}-\begin{Vmatrix} {\ul_{h}^{n}} \\ {\ul_{h}^{n-1}} \end{Vmatrix}^{2}_{G(\theta)}+ \left\Vert \sum_{j=0}^{2} {\lambda_{j,n}} \ul_{h}^{n-1+j} \right\Vert_{0}^{2} ,
\end{align}
where $\Vert \cdot \Vert_{0}$ is the norm induced by inner product $\left< \cdot , \cdot \right>_{0}$ and the corresponding $G(\theta)$-norm becomes
\begin{align} \label{eq:SDGstabdef}
\begin{Vmatrix} \ul_{h}^{n+1} \\ \ul_{h}^{n} \end{Vmatrix}^{2}_{G(\theta)} = {\frac{1}{4}}(1 + {\theta}) \left\Vert \ul_{h}^{n+1} \right\Vert_{0}^{2} + {\frac{1}{4}}(1 - {\theta}) \left\Vert \ul_{h}^{n} \right\Vert_{0}^{2}.
\end{align}
Then we apply \eqref{eq:aEstimator}, \eqref{eq:SDGstabterm} and Cauchy Schwarz inequality to \eqref{eq:SDStabstep1}:
\begin{gather}
\begin{Vmatrix} \ul_{h}^{n+1} \\ \ul_{h}^{n} \end{Vmatrix}^{2}_{G(\theta)} - \begin{Vmatrix} \ul_{h}^{n} \\ \ul_{h}^{n-1} \end{Vmatrix}^{2}_{G(\theta)}
+ \left\Vert \sum_{j=0}^{2} \lambda_{j,n}\ul_{h}^{n-1+j} \right\Vert_{0}^2 + C_{2}(\alpha_2 k_n-\alpha_0 k_{n-1})\|\ul_{h,\beta}^{n}\|_U^2  \notag \\
\leq \frac{C_{2}}{2}(\alpha_2 k_n-\alpha_0 k_{n-1})\|\ul_{h,\beta}^{n}\|_U^2+\frac{1}{2C_{2}}(\alpha_2 k_n-\alpha_0 k_{n-1}) \left\Vert\F_{\beta}^{n} \right\Vert_{U'}^2.  \label{eq:SDStabstep2}
\end{gather}
Note that
\begin{align}  \label{eq:EstAvestep}
\frac{1-\theta}{2} \left( k_{n}+k_{n-1} \right) \leq \alpha_2 k_n-\alpha_0 k_{n-1} \leq \frac{1+\theta}{2} \left( k_{n} + k_{n-1} \right),
\end{align}
\eqref{eq:SDStabstep2} becomes
\begin{align}
\begin{Vmatrix} \ul_{h}^{n+1} \\ \ul_{h}^{n} \end{Vmatrix}^{2}_{G(\theta)} - \begin{Vmatrix} \ul_{h}^{n} \\ \ul_{h}^{n-1} \end{Vmatrix}^{2}_{G(\theta)}
+ \left\Vert \sum_{j=0}^{2} \lambda_{j,n}\ul_{h}^{n-1+j} \right\Vert_{0}^2 + \frac{C_{2}(1-\theta)}{4}(k_n + k_{n-1})\left\Vert \ul_{h,\beta}^{n} \right\Vert_U^2
\leq \frac{1+\theta}{4C_{2}}(k_n+k_{n-1}) \left\Vert \F_{\beta}^{n} \right\Vert_{U'}^2. \label{eq:SDStabstep3}
\end{align}
Summing over \eqref{eq:SDStabstep3} from $n = 1, \cdots, M-1$ and using \eqref{eq:SDGstabdef}, we obtain \eqref{eq:SDsoluStab}.
\end{proof}
\ \\

Next we apply $G$-stability ( Lemma \ref{lemma:GstabDLN}) and consistency (Lemma \ref{lemma:2ndDLN}) properties of the DLN algorithm to show the second order convergence of approximate solutions to unsteady Stokes/Darcy model.
We denote $\ul^{n}=(\u_{f}^{n},\phi^{n})$ and $p^{n}$ be the exact solutions of the coupled Stokes/Darcy model \eqref{eq:WeakSD} at time $t_{n}$ and define the error functions to be
\begin{align}
\e^{n}&=\ul_{h}^{n}-\ul^{n}=(\ul_{h}^{n}-P_h^{\ul}\ul^n)-(\ul^n-P_h^{\ul}\ul^n)=\eta^n-\xi^n. \notag \\
e_{p}^{n}&=p_{h}^{n}-p^{n}=(p_{h}^{n}-P_h^{p}p^{n})-(p^{n}-P_h^{p}p^{n})=\eta_{p}^n-\xi_{p}^n,   \label{eq:defErrfunc}
\end{align}
and $\eta^0=\eta^1=0$.
For variable timestepping analysis, we need to define some continuous and discrete norms. Given $\vl \in \U$, $q \in Q_{f}$, $\mathbf{G} \in \U'$ and $1 \leq m,s < \infty$, we define continuous norms
\begin{gather*}
\left\Vert \vl \right\Vert_{m,0} := \left( \int_{0}^{T} \left\Vert \vl(t) \right\Vert_{0}^{m} dt \right)^{1/m}, \ \ \
\left\Vert \vl \right\Vert_{m,s} := \left( \int_{0}^{T} \left\Vert \vl(t) \right\Vert_{s}^{m} dt \right)^{1/m}, \ \ \
\left\Vert \vl \right\Vert_{m,\U} := \left( \int_{0}^{T} \left\Vert \vl(t) \right\Vert_{\U}^{m} dt \right)^{1/m}.  \\
\left\Vert q \right\Vert_{m,L^{2}} := \left( \int_{0}^{T} \left\Vert q(t) \right\Vert^{m} dt \right)^{1/m}, \ \ \
\left\Vert \mathbf{G} \right\Vert_{m,\U'} := \left( \int_{0}^{T} \left\Vert G(t) \right\Vert_{\U'}^{m} dt \right)^{1/m}.
\end{gather*}
and new discrete norms
\begin{gather*}
\left\Vert \left| \vl \right| \right\Vert_{m,0} := \left( \sum_{n=0}^{N-1} k_{n} \left\Vert \vl^{n+1} \right\Vert_{0}^{m} \right)^{1/m}, \ \ \
\left\Vert \left| \vl \right| \right\Vert_{m,s} := \left( \sum_{n=0}^{N-1} k_{n} \left\Vert \vl^{n+1} \right\Vert_{s}^{m} \right)^{1/m}, \\
\left\Vert \left| \vl_{\beta} \right| \right\Vert_{m,s} := \left( \sum_{n=0}^{N-1} \left(k_{n-1} + k_{n} \right) \left\Vert \vl \left(t_{n,\beta} \right) \right\Vert_{s}^{m} \right)^{1/m}.
\end{gather*}
Now we have the main theorem for error analysis.
\begin{theorem} (Second order convergence)
\label{theorem:DLNSDErrorEsti}
The approximate solutions $\{\ul_{h}^{n}\}_{n=0}^{N}$ by  the variable timestepping DLN scheme \eqref {eq:DLNSDWeakForm2} with parameter $\theta \in [0,1)$ satisfy
\begin{align}
\left\Vert \left| \ul_{h}-\ul \right| \right\Vert_{2,0} \leq & C(\theta)
\bigg\{ \max_{1\leq n\leq N-1}\left\{ ( k_{n} + k_{n-1} )^{2} \right\} \left( \left\Vert p_{tt} \right\Vert_{2,L^{2}} + \left\Vert \ul_{ttt} \right\Vert_{2,0} + \left\Vert \ul_{tt} \right\Vert_{2,\U} + \left\Vert \F_{tt} \right\Vert_{2,\U'} \right) \notag \\
&\ \ \ \ \ \ \ \ \ \ \ \ \ \ + h^{2} \left\Vert \ul_{t} \right\Vert_{2,2} + h^{2} \left\Vert \left| \ul \right| \right\Vert_{2,2} \bigg\}, \label{eq:DLNSDErrEstL2}
\end{align}
and
\begin{gather}
\left(\sum_{n=1}^{N-1} \left(\alpha_{2}k_{n} - \alpha_{0}k_{n-1}\right) \left\Vert \ul(t_{n,\beta}) - \ul_{h,\beta}^{n} \right\Vert_{\U}^{2} \right)^{1/2} \notag \\
\leq  C(\theta) \max_{1\leq n\leq N-1}\left\{ ( k_{n} + k_{n-1} )^{2} \right\} \left( \left\Vert p_{tt} \right\Vert_{2,L^{2}} + \left\Vert \ul_{ttt} \right\Vert_{2,0} + \left\Vert \ul_{tt} \right\Vert_{2,\U} + \left\Vert \F_{tt} \right\Vert_{2,\U'}  \right) + C(\theta) h^{2} \left\Vert \ul_{t} \right\Vert_{2,2} \notag \\
+ C(\theta) h \max_{1\leq n\leq N-1}\left\{ ( k_{n} + k_{n-1} )^{2} \right\} \left\Vert \ul_{tt} \right\Vert_{2,2} + C(\theta) h \left\Vert \left| \ul_{\beta} \right| \right\Vert_{2,2}.  \label{eq:DLNSDErrEstH1Final}
\end{gather}
\end{theorem}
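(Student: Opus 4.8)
The plan is to run an energy argument on the ``discrete part'' of the error, $\eta^{n}=\ul_{h}^{n}-P_{h}^{\ul}\ul^{n}$, testing the error equation against the DLN average $\eta_{\beta}^{n}:=\beta_{2,n}\eta^{n+1}+\beta_{1,n}\eta^{n}+\beta_{0,n}\eta^{n-1}$ and turning the resulting identity into a telescoping inequality via the $G$-stability of Lemma~\ref{lemma:GstabDLN} on the left and the consistency estimates of Lemma~\ref{lemma:2ndDLN} on the right; the two stated bounds then follow from \eqref{eq:ProjProperty} and the splitting $\e^{n}=\eta^{n}-\xi^{n}$ of \eqref{eq:defErrfunc}.

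First I would write the continuous weak form \eqref{eq:WeakSD} at the off-grid point $t_{n,\beta}$, tested with $\vl_{h}\in\V_{h}$, and subtract it from \eqref{eq:DLNSDWeakForm2}. Inserting $\ul_{h}^{n-1+j}=P_{h}^{\ul}\ul^{n-1+j}+\eta^{n-1+j}$ carries everything but the $\eta$-terms to the right-hand side, leaving: the time-derivative consistency error $\tau_{1}^{n}:=\ul_{t}(t_{n,\beta})-(\alpha_{2}k_{n}-\alpha_{0}k_{n-1})^{-1}\sum_{j}\alpha_{j}\ul^{n-1+j}$; the term $(\alpha_{2}k_{n}-\alpha_{0}k_{n-1})^{-1}\sum_{j}\alpha_{j}\xi^{n-1+j}$, which I would immediately rewrite using the moment condition $\sum_{j}\alpha_{j}=0$ as a fixed combination of $\int_{t_{n-1}}^{t_{n+1}}\xi_{t}\,dt$-type integrals, so that dividing by the stepsize does not cost an order of accuracy; the spatial term, which after one more use of the projection identity \eqref{projection} and the fact that $\eta_{\beta}^{n}$ is discretely divergence free collapses to $a(\ul_{\beta}^{n}-\ul(t_{n,\beta}),\eta_{\beta}^{n})+b(\eta_{\beta}^{n},p(t_{n,\beta})-p_{\beta}^{n})$, where $\ul_{\beta}^{n},p_{\beta}^{n}$ denote the $\beta$-averages of the exact solution; and the data consistency error $\F_{\beta}^{n}-\F(t_{n,\beta})$. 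The one non-routine step here is the cancellation of the exact-pressure term: since $\eta_{\beta}^{n}\in\V_{h}$, one has $b(\eta_{\beta}^{n},p(t_{n,\beta}))=b(\eta_{\beta}^{n},p(t_{n,\beta})-P_{h}^{p}p(t_{n,\beta}))$, and this is exactly annihilated by $a(\ul(t_{n,\beta})-P_{h}^{\ul}\ul(t_{n,\beta}),\eta_{\beta}^{n})$ thanks to \eqref{projection}; without it the pressure would enter the final estimate at order $h$ rather than $h^{2}$.

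Next I would multiply the identity by $\alpha_{2}k_{n}-\alpha_{0}k_{n-1}$, apply Lemma~\ref{lemma:GstabDLN} with $\langle\cdot,\cdot\rangle_{0}$ in place of the $L^{2}$ inner product (exactly as in the stability proof) to the leading term, extracting the telescoping difference of the $G(\theta)$-norms of $(\eta^{n+1},\eta^{n})$ and $(\eta^{n},\eta^{n-1})$ plus the nonnegative dissipation $\Vert\sum_{j}\lambda_{j,n}\eta^{n-1+j}\Vert_{0}^{2}$, and bound $a(\eta_{\beta}^{n},\eta_{\beta}^{n})$ below by $C_{2}\Vert\eta_{\beta}^{n}\Vert_{\U}^{2}$ via \eqref{eq:aEstimator}. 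Each right-hand-side term is then controlled by Cauchy--Schwarz, the equivalence \eqref{eq:EquiNorm}, the elementary bound $\Vert\nabla\cdot\eta_{f,\beta}^{n}\Vert\le C\Vert\eta_{\beta}^{n}\Vert_{\U}$ for the $b$-term, and Young's inequality with weight proportional to $\alpha_{2}k_{n}-\alpha_{0}k_{n-1}$, so all $\Vert\eta_{\beta}^{n}\Vert_{\U}^{2}$ contributions are absorbed into $C_{2}(\alpha_{2}k_{n}-\alpha_{0}k_{n-1})\Vert\eta_{\beta}^{n}\Vert_{\U}^{2}$. Using \eqref{eq:EstAvestep} to trade $\alpha_{2}k_{n}-\alpha_{0}k_{n-1}$ for $k_{n}+k_{n-1}$, and invoking Lemma~\ref{lemma:2ndDLN} --- which holds verbatim with $\Vert\cdot\Vert$ replaced by the $\U$-, $H^{2}$- or $\U'$-norm, by the same Taylor-with-integral-remainder proof --- then yields $(k_{n}+k_{n-1})^{4}$ times local integrals of $\Vert\ul_{ttt}\Vert_{0}^{2}$, $\Vert\ul_{tt}\Vert_{\U}^{2}$, $\Vert p_{tt}\Vert^{2}$, $\Vert\F_{tt}\Vert_{\U'}^{2}$ plus the local integral of $\Vert\xi_{t}\Vert_{0}^{2}$. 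Because the Stokes/Darcy form is linear, no term like $k_{n}\Vert\eta^{n}\Vert_{0}^{2}$ appears on the right, so no discrete Gronwall inequality is needed.

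Finally I would sum over $n=1,\dots,M-1$, telescope the $G(\theta)$-norms (the initial one vanishing because $\eta^{0}=\eta^{1}=0$), observe each $[t_{n-1},t_{n+1}]$ is covered at most twice to replace the summed local integrals by the global norms, and invoke \eqref{eq:ProjProperty} --- with $P_{h}^{\ul}$ time-independent, hence commuting with $\partial_{t}$ --- to bound $\Vert\xi_{t}\Vert_{2,0}\le Ch^{2}\Vert\ul_{t}\Vert_{2,2}$. For \eqref{eq:DLNSDErrEstL2} I would bound $\max_{m}\Vert\eta^{m}\Vert_{0}$ by the square root of the $G(\theta)$-norm, use $\sum_{n}k_{n}=T$ to pass to $\Vert\,|\eta|\,\Vert_{2,0}$, and add $\Vert\,|\xi|\,\Vert_{2,0}\le Ch^{2}\Vert\,|\ul|\,\Vert_{2,2}$. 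For \eqref{eq:DLNSDErrEstH1Final} I would decompose $\ul(t_{n,\beta})-\ul_{h,\beta}^{n}$ into the projection error $\ul(t_{n,\beta})-P_{h}^{\ul}\ul(t_{n,\beta})$, minus the $P_{h}^{\ul}$-image of $\ul_{\beta}^{n}-\ul(t_{n,\beta})$, minus $\eta_{\beta}^{n}$, and estimate these three pieces by $Ch\Vert\ul(t_{n,\beta})\Vert_{2}$ (which after multiplying by $\alpha_{2}k_{n}-\alpha_{0}k_{n-1}$ and summing is exactly $Ch\Vert\,|\ul_{\beta}|\,\Vert_{2,2}$ by the definition of that discrete norm), by $\Vert\ul_{\beta}^{n}-\ul(t_{n,\beta})\Vert_{\U}+Ch\Vert\ul_{\beta}^{n}-\ul(t_{n,\beta})\Vert_{2}$ (giving the $\Vert\ul_{tt}\Vert_{2,\U}$ term and the $h\max\{(k_{n}+k_{n-1})^{2}\}\Vert\ul_{tt}\Vert_{2,2}$ term via Lemma~\ref{lemma:2ndDLN} in the $\U$- and $H^{2}$-norms), and by the already-established bound on $\sum_{n}(\alpha_{2}k_{n}-\alpha_{0}k_{n-1})\Vert\eta_{\beta}^{n}\Vert_{\U}^{2}$, respectively. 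The delicate point throughout is keeping the correct powers of $h$ and $k_{n}+k_{n-1}$ in the projected $\beta$-average term and in the $\sum_{j}\alpha_{j}\xi^{n-1+j}$ term --- which is precisely what the pressure cancellation and the moment condition $\sum_{j}\alpha_{j}=0$ are for --- while everything else is routine manipulation.
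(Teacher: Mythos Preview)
Your proposal is correct and follows essentially the same route as the paper: derive the error equation by subtracting the continuous weak form at $t_{n,\beta}$ from \eqref{eq:DLNSDWeakForm2}, test with $\eta_{\beta}^{n}$, use the projection cancellations (the paper records these as \eqref{eq:DLNSDprojterm}), apply Lemma~\ref{lemma:GstabDLN} and coercivity on the left, Lemma~\ref{lemma:2ndDLN} on the consistency terms and the moment condition $\sum_{j}\alpha_{j}=0$ on the $\xi$-term on the right, then sum and split $\e^{n}=\eta^{n}-\xi^{n}$.

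The only cosmetic differences are in presentation. For the pressure cancellation you phrase it as annihilating $b(\eta_{\beta}^{n},p(t_{n,\beta}))$ against $a(\ul(t_{n,\beta})-P_{h}^{\ul}\ul(t_{n,\beta}),\eta_{\beta}^{n})$; the paper instead applies the projection at the discrete $\beta$-average and simply records $a(\xi_{\beta}^{n},\vl_{h})+b(\vl_{h},\xi_{p,\beta}^{n})=0$ and $b(\vl_{h},\eta_{p,\beta}^{n})=0$, which is a cleaner bookkeeping of the same identity. For \eqref{eq:DLNSDErrEstH1Final} you split $\ul(t_{n,\beta})-\ul_{h,\beta}^{n}$ through the intermediate point $P_{h}^{\ul}\ul(t_{n,\beta})$, whereas the paper splits through $\ul_{\beta}^{n}$ and then through $P_{h}^{\ul}\ul_{\beta}^{n}$; both decompositions produce exactly the same four contributions in the final bound.
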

\begin{proof}
By \eqref{eq:WeakSD}, the true solutions of unsteady Stokes/Darcy model at time $t_{n,\beta}$. By \eqref{eq:WeakSD} satisfy
\begin{align} \label{eq:DLNSDError1}
\left<\frac{\partial\ul}{\partial t}(t_{n,\beta}),\vl_h \right>_{0}+a(\ul(t_{n,\beta}),\vl_h)+b(\vl_h,p(t_{n,\beta}))=\left< \F(t_{n,\beta}),\vl_h \right>_{\U'}, \ \ \ \text{for all }\ \vl_h\in\V_{h}.
\end{align}
Equivalently, \eqref{eq:DLNSDError1} can be rewritten as
\begin{align}  \label{eq:DLNSDError2}
\left<\frac{\alpha_2\ul^{n+1}+\alpha_1\ul^{n}+\alpha_0\ul^{n-1}}{\alpha_2k_n-\alpha_0k_{n-1}},\vl_h \right>_{0}+a(\ul_{\beta}^{n},\vl_h)+b(\vl_h,p^{n}_{\beta})
=\left< \F_{\beta}^{n},\vl_h \right>_{\U'}+\tau(\ul(t_{n,\beta}),p(t_{n,\beta}),\vl_h),
\end{align}
where
\begin{gather*}
\ul_{\beta}^{n} = \beta_{2,n} \ul^{n+1}+\beta_{1,n} \ul^{n}+\beta_{0,n} \ul^{n-1}, \ \ \  p_{\beta}^{n} = \beta_{2,n} p^{n+1}+\beta_{1,n} p^{n}+\beta_{0,n} p^{n-1}, \\
\tau(\ul(t_{n,\beta}),p(t_{n,\beta}),\vl_h) = \left<\frac{\alpha_2\ul^{n+1}+\alpha_1\ul^{n}+\alpha_0\ul^{n-1}}{\alpha_2k_n-\alpha_0k_{n-1}}-\frac{\partial\ul}{\partial t}(t_{n,\beta}),\vl_h \right>_{0}
+a(\ul_{\beta}^{n}-\ul(t_{n,\beta}),\vl_h) \\
+b(\vl_h, p^{n}_{\beta}-p(t_{n,\beta}))- \left<\F^{n}_{\beta}-\F(t_{n,\beta}),\vl_h \right>_{\U'}.
\end{gather*}
Note that $\V_{h} \subset \U_{h}$, the system \eqref{eq:DLNSDWeakForm1} holds for all $\vl_h\in\V_{h}$. Thus we subtract \eqref{eq:DLNSDError2} from first equation of \eqref{eq:DLNSDWeakForm1} and use the definition of error
function in \eqref{eq:defErrfunc} to obtain: for all $\vl_h\in\V_{h}$,
\begin{gather}
\left< \frac{\alpha_2\eta^{n+1}+\alpha_1\eta^{n}+\alpha_0\eta^{n-1}}{\alpha_2k_n-\alpha_0k_{n-1}},\vl_h \right>_{0} + a(\eta_{\beta}^{n},\vl_h)+b(\vl_h,\eta_{p,\beta}^{n}) \notag \\
=\left< \frac{\alpha_2\xi^{n+1}+\alpha_1\xi^{n}+\alpha_0\xi^{n-1}}{\alpha_2k_n-\alpha_0k_{n-1}},\vl_h \right>_{0} + a(\xi_{\beta}^{n},\vl_h)+b(\vl_h,\xi^{n}_{p,\beta})
-\tau \left(\ul(t_{n,\beta}),p(t_{n,\beta}),\vl_h \right), \label{eq:DLNSDError3}
\end{gather}
where
\begin{align*}
\eta_{\beta}^{n} &= \beta_{2,n}\eta^{n+1}+\beta_{1,n}\eta^{n}+\beta_{0,n}\eta^{n-1}, \ \ \ \ \ \ \xi_{\beta}^{n} = \beta_{2,n}\xi^{n+1}+\beta_{1,n}\xi^{n}+\beta_{0,n}\xi^{n-1}, \\
\eta_{p,\beta}^{n} &= \beta_{2,n}\eta_{p}^{n+1}+\beta_{1,n}\eta_{p}^{n}+\beta_{0,n}\eta_{p}^{n-1}, \ \ \ \ \xi^{n}_{p,\beta} = \beta_{2,n}\xi_{p}^{n+1}+\beta_{1,n}\xi_{p}^{n}+\beta_{0,n}\xi_{p}^{n-1}.
\end{align*}
By the definition of discrete divergence free space $\V_{h}$ and the definition of projection operator $P_{h}$, we have
\begin{align} \label{eq:DLNSDprojterm}
b(\vl_h,\eta_{p,\beta}^{n}) = 0 \ \ \ \text{and} \ \ \ a(\xi_{\beta}^{n},\vl_h)+b(\vl_h,\xi^{n}_{p,\beta})=0.
\end{align}
Choosing $\vl_h = \eta_{\beta}^{n}$ in \eqref{eq:DLNSDError3}, we apply \eqref{eq:DLNSDprojterm} and the Lemma \ref{lemma:GstabDLN} to the equation \eqref{eq:DLNSDError3} to obtain
\begin{align} \label{eq:DLNSDError4}
\begin{Vmatrix} \eta^{n+1} \\ \eta^n \end{Vmatrix}^{2}_{G(\theta)}-	\begin{Vmatrix} \eta^{n} \\ \eta^{n-1} \end{Vmatrix}^{2}_{G(\theta)}+ \left\Vert \sum_{j=0}^{2} {\lambda_{j,n}} \eta^{n-1+j} \right\Vert ^{2}_0+C_{2}(\alpha_{2}k_n - \alpha_{0}k_{n-1}) \left\Vert \eta_{\beta}^{n} \right\Vert_{\U}^2  \notag\\
\leq(\alpha_2\xi^{n+1}+\alpha_1\xi^{n}+\alpha_0\xi^{n-1},\eta_{\beta}^{n})
- \left(\alpha_2k_n-\alpha_0 k_{n-1} \right)\tau(\ul(t_{n,\beta}),p(t_{n,\beta}),\eta_{\beta}^{n}).
\end{align}
Using the Taylor theorem with integral reminder, we have
\begin{align} \label{eq:DLNSDTaylor}
\ul^{n}=\ul^{n+1}+\int_{t_{n+1}}^{t_{n}}\ul_{t}dt, \ \ \ \text{and} \ \ \
\ul^{n-1}=\ul^{n+1}+\int_{t_{n+1}}^{t_{n-1}}\ul_{t}dt.
\end{align}
By \eqref{eq:DLNSDTaylor} and the fact that $\alpha_{2}+\alpha_{1}+\alpha_{0} = 0$,
\begin{align}
\left\Vert \alpha_2\xi^{n+1}+\alpha_1\xi^{n}+\alpha_0\xi^{n-1} \right\Vert_{0}
&= \left\Vert  \alpha_1\int_{t_{n+1}}^{t_{n}}(P_h^{\ul}-Id)\ul_{t}dt+\alpha_0\int_{t_{n+1}}^{t_{n-1}}(P_h^{\ul}-Id)\ul_{t}dt \right\Vert_{0} \notag \\
&\leq C(\theta) \int_{t_{n-1}}^{t_{n+1}} \left\Vert (P_h^{\ul}-Id)\ul_{t} \right\Vert_{0} dt, \label{eq:DLNSDEsti1}
\end{align}
where $Id$ is the identity mapping. Thus by \eqref{eq:EquiNorm}, \eqref{eq:DLNSDEsti1}, Cauchy Schwarz inequality and Young's inequality,
\begin{align} \label{eq:DLNSDEsti2}
\left< \alpha_2\xi^{n+1}+\alpha_1\xi^{n}+\alpha_0\xi^{n-1},\eta_{\beta}^{n} \right>_{0}
\leq C(\theta) \int_{t_{n-1}}^{t_{n+1}} \left\Vert (P_h^{\ul}-Id)\ul_{t} \right\Vert_{0}^2 dt+\frac{C_{2}(\alpha_{2}k_n - \alpha_{0}k_{n-1})}{2} \left\Vert \eta_{\beta}^{n} \right\Vert_{\U}^2.
\end{align}
Summing over \eqref{eq:DLNSDError4} from $n=2,...,M$ ($2 \leq M \leq N-1$) and using \eqref{eq:DLNSDEsti2},
\begin{gather}
\begin{Vmatrix} \eta^{M+1} \\ \eta^{M} \end{Vmatrix}^{2}_{G(\theta)}-\begin{Vmatrix} \eta^{1} \\ \eta^{0} \end{Vmatrix}^{2}_{G(\theta)}
+ \sum_{n=1}^{M} \left\Vert \sum_{j=0}^{2} \lambda_{j,n} \eta^{n-1+j}\right\Vert ^{2}_0
+\sum_{n=1}^{M}\frac{C_{2}(\alpha_{2}k_n - \alpha_{0}k_{n-1})}{2} \left\Vert \eta_{\beta}^{n} \right\Vert_U^2    \notag \\
\leq C(\theta) \sum_{n=1}^{M} \int_{t_{n-1}}^{t_{n+1}}\Vert(P_h^{\ul}-I)\ul_{t}\Vert_{0}^2dt - \sum_{n=1}^{M}(\alpha_2k_n-\alpha_0 k_{n-1})\tau \left(\ul(t_{n,\beta}),p(t_{n,\beta}),\eta_{\beta}^{n}\right). \label{eq:DLNSDError5}
\end{gather}
Then we deal with four terms of $\tau \left(\ul(t_{n,\beta}),p(t_{n,\beta}),\eta_{\beta}^{n}\right)$ respectively. Combining \eqref{eq:EquiNorm}, \eqref{eq:aEstimator}, Lemma \ref{lemma:2ndDLN}
and using Cauchy Schwarz inequality, Young's inequality again, we obtain
\begin{align*}
\left<\frac{\alpha_2\ul^{n+1}+\alpha_1\ul^{n}+\alpha_0\ul^{n-1}}{\alpha_2k_n-\alpha_0k_{n-1}}-\frac{\partial\ul}{\partial t}(t_{n,\beta}),\eta_{\beta}^{n+1} \right>_{0}
\leq C(\theta)(k_n+k_{n-1})^3 \int_{t_{n-1}}^{t_{n+1}} \left\Vert \ul_{ttt} \right\Vert_{0}^2 dt + \frac{C_{2}}{16} \left\Vert \eta_{\beta}^{n} \right\Vert_{\U}^2,
\end{align*}
\begin{align*}
a(\ul_{\beta}^{n}-\ul(t_{n,\beta}),\eta_{\beta}^{n}) \leq C_{1} \left\Vert \ul_{\beta}^{n}-\ul(t_{n,\beta}) \right\Vert_{\U} \left\Vert \eta_{\beta}^{n} \right\Vert_{\U}
\leq C(k_n+k_{n-1})^3 \int_{t_{n-1}}^{t_{n+1}} \left\Vert \ul_{tt} \right\Vert_{\U}^2 dt + \frac{C_{2}}{16}\left\Vert \eta_{\beta}^{n} \right\Vert_{\U}^2,
\end{align*}
\begin{align*}
b(\eta_{\beta}^{n+1}, p^{n}_{\beta}-p(t_{n,\beta})) \leq C \left\Vert p^{n+1}_{\beta}-p(t_{n,\beta}) \right\Vert \left\Vert \eta_{\beta}^{n} \right\Vert_{\U}
\leq C(k_n+k_{n-1})^3 \int_{t_{n-1}}^{t_{n+1}} \left\Vert p_{tt} \right\Vert^{2} dt + \frac{C_{2}}{16}\left\Vert \eta_{\beta}^{n} \right\Vert_{\U}^2,
\end{align*}
\begin{align}
\ \ \ \ \ \ \left< \F^{n}_{\beta}-\F(t_{n,\beta}),\eta_{\beta}^{n} \right>_{\U'} \leq \left\Vert \F^{n}_{\beta}-\F(t_{n,\beta}) \right\Vert_{\U'} \left\Vert \eta_{\beta}^{n} \right\Vert_{\U}
\leq C (k_n+k_{n-1})^3 \int_{t_{n-1}}^{t_{n+1}} \left\Vert \F_{tt} \right\Vert_{\U'}^2 dt + \frac{C_{2}}{16}\left\Vert \eta_{\beta}^{n} \right\Vert_{\U}^2. \label{eq:tauEstifour}
\end{align}
Since $\eta^{1}=0 = \eta^{0}$ and
by \eqref{eq:ProjProperty}, \eqref{eq:EstAvestep}, the definition of $G(\theta)$-norm in \eqref{eq:defGnorm}, estimators in \eqref{eq:tauEstifour}, \eqref{eq:DLNSDError5} becomes
\begin{gather}
\frac{1+\theta}{4} \left\Vert \eta^{M+1} \right\Vert^2_0 + \frac{1-\theta}{4} \left\Vert \eta^{M} \right\Vert^2_0 + \sum_{n=1}^{M} \left\Vert \sum_{j=0}^{2} \lambda_{j,n} \eta^{n-1+j} \right\Vert_{0}^{2}
+ \frac{C_{2}}{4} \sum_{n=1}^{M} \left(\alpha_{2}k_{n} - \alpha_{0}k_{n-1} \right) \left\Vert \eta_{\beta}^{n} \right\Vert_{\U}^2   \notag \\
\leq C(\theta) \max_{1\leq n\leq N-1}\left\{( k_{n} + k_{n-1} )^{4} \right\}\left( \left\Vert p_{tt} \right\Vert^{2}_{2,L^{2}} + \left\Vert \ul_{ttt} \right\Vert_{2,0}^2 + \left\Vert \ul_{tt} \right\Vert_{2,\U}^2 + \left\Vert \F_{tt} \right\Vert_{2,\U'}^2  \right)
+\sum_{n=1}^{N-1} C(\theta) \int_{t_{n-1}}^{t_{n+1}} \left\Vert (P_h^{\ul}-I)\ul_{t} \right\Vert_{0}^2dt \notag \\
\leq  C(\theta) \max_{1\leq n\leq N-1}\left\{ ( k_{n} + k_{n-1} )^{4} \right\} \left( \left\Vert p_{tt} \right\Vert^{2}_{2,L^{2}} + \left\Vert \ul_{ttt} \right\Vert_{2,0}^2 + \left\Vert \ul_{tt} \right\Vert_{2,\U}^2 + \left\Vert \F_{tt} \right\Vert_{2,\U'}^2  \right) + C(\theta) h^{4} \left\Vert \ul_{t} \right\Vert_{2,2}^2.  \label{eq:DLNSDetaEsti}
\end{gather}
Using triangle inequality,
\begin{align} \label{eq:DLNSDErrEstL2Tri}
\left\Vert \left| \e \right| \right\Vert_{2,0} \leq \left\Vert \left| \xi \right| \right\Vert_{2,0} + \left\Vert \left| \eta \right| \right\Vert_{2,0},
\end{align}
By \eqref{eq:ProjProperty} and \eqref{eq:DLNSDetaEsti}, we have
\begin{align} \label{eq:DLNSDErrEstL2xi}
\left\Vert \left| \xi \right| \right\Vert_{2,0} = \left( \sum_{n=0}^{N-1} k_{n} \left\Vert \xi^{n+1} \right\Vert_{0}^{2} \right)^{1/2}
= \left( \sum_{n=0}^{N-1} k_{n} \left\Vert \ul^{n} - P_{h}^{\ul} \ul^{n} \right\Vert_{0}^{2} \right)^{1/2} \leq \left( \sum_{n=0}^{N-1} C_{3}h^{4} k_{n} \left\Vert \ul^{n+1} \right\Vert_{2}^{2} \right)^{1/2}
\leq C h^{2} \left\Vert \left| \ul \right| \right\Vert_{2,2}.
\end{align}
\begin{align}
 \left\Vert \left| \eta \right| \right\Vert_{2,0}
\leq & C(\theta) \left( \sum_{n=0}^{N-1} k_{n} \right)^{1/2}
\bigg\{ \max_{1\leq n\leq N-1}\left\{ ( k_{n} + k_{n-1} )^{2} \right\} \left( \left\Vert p_{tt} \right\Vert_{2,L^{2}} + \left\Vert \ul_{ttt} \right\Vert_{2,0} + \left\Vert \ul_{tt} \right\Vert_{2,\U} + \left\Vert \F_{tt} \right\Vert_{2,\U'} \right) \notag \\
& \ \ \ \ \ \ \ \ \ \ \ \ \ \ \ \ \ \ \ \ \ \ \ \ \ \ \ \ \ \ \ \ \ \ \ \ + h^{2} \left\Vert \ul_{t} \right\Vert_{2,2} \bigg\} \notag \\
\leq & C(\theta) \sqrt{T}
\left\{ \max_{1\leq n\leq N-1}\left\{ ( k_{n} + k_{n-1} )^{2} \right\} \left( \left\Vert p_{tt} \right\Vert_{2,L^{2}} + \left\Vert \ul_{ttt} \right\Vert_{2,0} + \left\Vert \ul_{tt} \right\Vert_{2,\U} + \left\Vert \F_{tt} \right\Vert_{2,\U'} \right) + h^{2} \left\Vert \ul_{t} \right\Vert_{2,2} \right\} \label{eq:DLNSDErrEstL2eta}
\end{align}
Combining \eqref{eq:DLNSDErrEstL2Tri}, \eqref{eq:DLNSDErrEstL2xi} and \eqref{eq:DLNSDErrEstL2eta} results in \eqref{eq:DLNSDErrEstL2}.
For second part, we have
\begin{gather}
\sum_{n=1}^{N-1} \left(\alpha_{2}k_{n} - \alpha_{0}k_{n-1}\right) \left\Vert \ul(t_{n,\beta}) - \ul_{h,\beta}^{n} \right\Vert_{\U}^{2} \notag\\
\leq  C(\theta)\sum_{n=1}^{N-1} \left(k_{n} + k_{n-1}\right) \left\Vert \ul(t_{n,\beta}) - \ul_{\beta}^{n} \right\Vert_{\U}^{2}
+ \sum_{n=1}^{N-1} \left(\alpha_{2}k_{n} - \alpha_{0} k_{n-1}\right) \left\Vert \ul_{\beta}^{n} - \ul_{h,\beta}^{n} \right\Vert_{\U}^{2}.  \label{eq:DLNSDErrEstH1}
\end{gather}
Using Lemma \ref{lemma:2ndDLN},
\begin{align*}
C(\theta)\sum_{n=1}^{N-1} \left(k_{n} + k_{n-1}\right) \left\Vert \ul(t_{n,\beta}) - \ul_{\beta}^{n} \right\Vert_{\U}^{2}
\leq C(\theta) \max_{1\leq n\leq N-1}\left\{ ( k_{n} + k_{n-1} )^{4} \right\} \left\Vert \ul_{tt} \right\Vert_{2,\U}^{2}.
\end{align*}
And
\begin{align} \label{eq:DLNSDErrEstH1term2}
\sum_{n=1}^{N-1} \left(\alpha_{2}k_{n} - \alpha_{0} k_{n-1}\right) \left\Vert \ul_{\beta}^{n} - \ul_{h,\beta}^{n} \right\Vert_{\U}^{2}
\leq C(\theta) \sum_{n=1}^{N-1} \left(k_{n} + k_{n-1}\right) \left\Vert \xi_{\beta}^{n} \right\Vert_{\U}^{2} + \sum_{n=1}^{N-1} \left(\alpha_{2}k_{n}-\alpha_{0}k_{n-1}\right) \left\Vert \eta_{\beta}^{n} \right\Vert_{\U}^{2}
\end{align}
By \eqref{eq:ProjProperty} and linearity of the projection operator $P_{h}$,
\begin{align} \label{eq:DLNSDErrEstH1term21}
\left\Vert \xi_{\beta}^{n} \right\Vert_{\U}^{2} = \left\Vert P_{h}^{\ul} \ul_{\beta}^{n} - \ul_{\beta}^{n}  \right\Vert_{\U}^{2}
\leq C h^{2} \left\Vert\ul_{\beta}^{n}  \right\Vert_{2}^{2} \leq C h^{2} \left\Vert\ul_{\beta}^{n} - \ul(t_{n,\beta}) \right\Vert_{2}^{2} + C h^{2} \left\Vert \ul(t_{n,\beta}) \right\Vert_{2}^{2}
\end{align}
Applying Lemma \ref{lemma:2ndDLN} again to \eqref{eq:DLNSDErrEstH1term21},
\begin{align} \label{eq:DLNSDErrEstH1term22}
C(\theta) \sum_{n=1}^{N-1} \left(k_{n} + k_{n-1}\right) \left\Vert \xi_{\beta}^{n} \right\Vert_{\U}^{2}
\leq C(\theta) h^{2} \max_{1\leq n\leq N-1}\left\{ ( k_{n} + k_{n-1} )^{4} \right\} \left\Vert \ul_{tt} \right\Vert_{2,2}^{2}
+ C(\theta) h^{2} \left\Vert \left| \ul_{\beta} \right| \right\Vert_{2,2}^{2}.
\end{align}
Combining \eqref{eq:DLNSDetaEsti}, \eqref{eq:DLNSDErrEstH1}, \eqref{eq:DLNSDErrEstH1term2} and \eqref{eq:DLNSDErrEstH1term22}, we obtain
\begin{gather*}
\sum_{n=1}^{N-1} \left(\alpha_{2}k_{n} - \alpha_{0}k_{n-1}\right) \left\Vert \ul(t_{n,\beta}) - \ul_{h,\beta}^{n} \right\Vert_{\U}^{2} \\
\leq  C(\theta) \max_{1\leq n\leq N-1}\left\{ ( k_{n} + k_{n-1} )^{4} \right\} \left( \left\Vert p_{tt} \right\Vert^{2}_{2,L^{2}} + \left\Vert \ul_{ttt} \right\Vert_{2,0}^2 + \left\Vert \ul_{tt} \right\Vert_{2,\U}^2 + \left\Vert \F_{tt} \right\Vert_{2,\U'}^2  \right) + C(\theta) h^{4} \left\Vert \ul_{t} \right\Vert_{2,2}^2 \\
+ C(\theta) h^{2} \max_{1\leq n\leq N-1}\left\{ ( k_{n} + k_{n-1} )^{4} \right\} \left\Vert \ul_{tt} \right\Vert_{2,2}^{2} + C(\theta) h^{2} \left\Vert \left| \ul_{\beta} \right| \right\Vert_{2,2}^{2},
\end{gather*}
which results in \eqref{eq:DLNSDErrEstH1Final}
\end{proof}

\section{Numerical Tests} \label{sec:numtests}
In this section, we use two numerical experiments to verify two distinct properties of the DLN algorithm (stability and consistency). Both numerical tests are implemented by FreeFEM++. The first test confirms
that the variable timestepping DLN algorithm is stable for different values of parameter $\theta \in [0,1]$. In the second experiment, we apply the constant timestepping DLN algorithm to check the second order convergence of the approximate solutions as well as compare it with BDF2 scheme.
\subsection{Test of Variable Timestepping DLN algorithm}
In this experiment, we use the example mentioned in \cite{cao2014SDparallelNoniterative,jiang2019SDefficientEnsemble}. Considering the model problem on $\Omega_f=[0,\pi]\times[0,1]$ and $\Omega_p=[0,\pi]\times[-1,0]$ with the interface $\Gamma=[0,\pi]\times [0]$:
\begin{align*}
\u_{f} &=\left(\frac{1}{\pi}\sin(2\pi y)\cos(x)e^t, \left(-2+\frac{1}{\pi^2}\sin(\pi y)^2 \right)\sin(x)e^t \right),\\
p &=0,\\
\phi &=(e^{y}-e^{-y})\sin(x)e^t.
\end{align*}
For this test, we set the physical parameters $\rho$, g, $\nu$, $\mathbf{K}$, $S_0$ and $\mu_{BJS}$ all equal to 1 and we consider the cases of parameters $\theta=0.2,0.5,0.7$ in DLN scheme. The initial conditions, boundary conditions and the source terms follow from the exact solution. We use the well-know Taylor-Hood element (P2-P1)  for the fluid equation and the piecewise quadratic polynomials (P2) for the porous equation. To see the effect on the results by change of time steps, we fix the diameter $h=100$ for space triangulation. We apply the DLN algorithm to this test problem for 40 time steps and introduce the timestep function similar to that in \cite{chen2019VariArti}:
\begin{align} \label{eq:NumTest1stepfunc}
k_n=\begin{cases}
0.1&\text{$0\leq n\leq 10$},\\
0.1+0.05\sin(10t_n)&\text{$n>10$}.
\end{cases}
\end{align}
The graph of the time step function \eqref{eq:NumTest1stepfunc} is given in Figure \ref{fig2}.
\begin{figure}[ptbh]
\centering
\par
{
		\begin{minipage}[t]{0.45\linewidth}
			\centering
			\includegraphics[width=2.5in,height=2.0in]{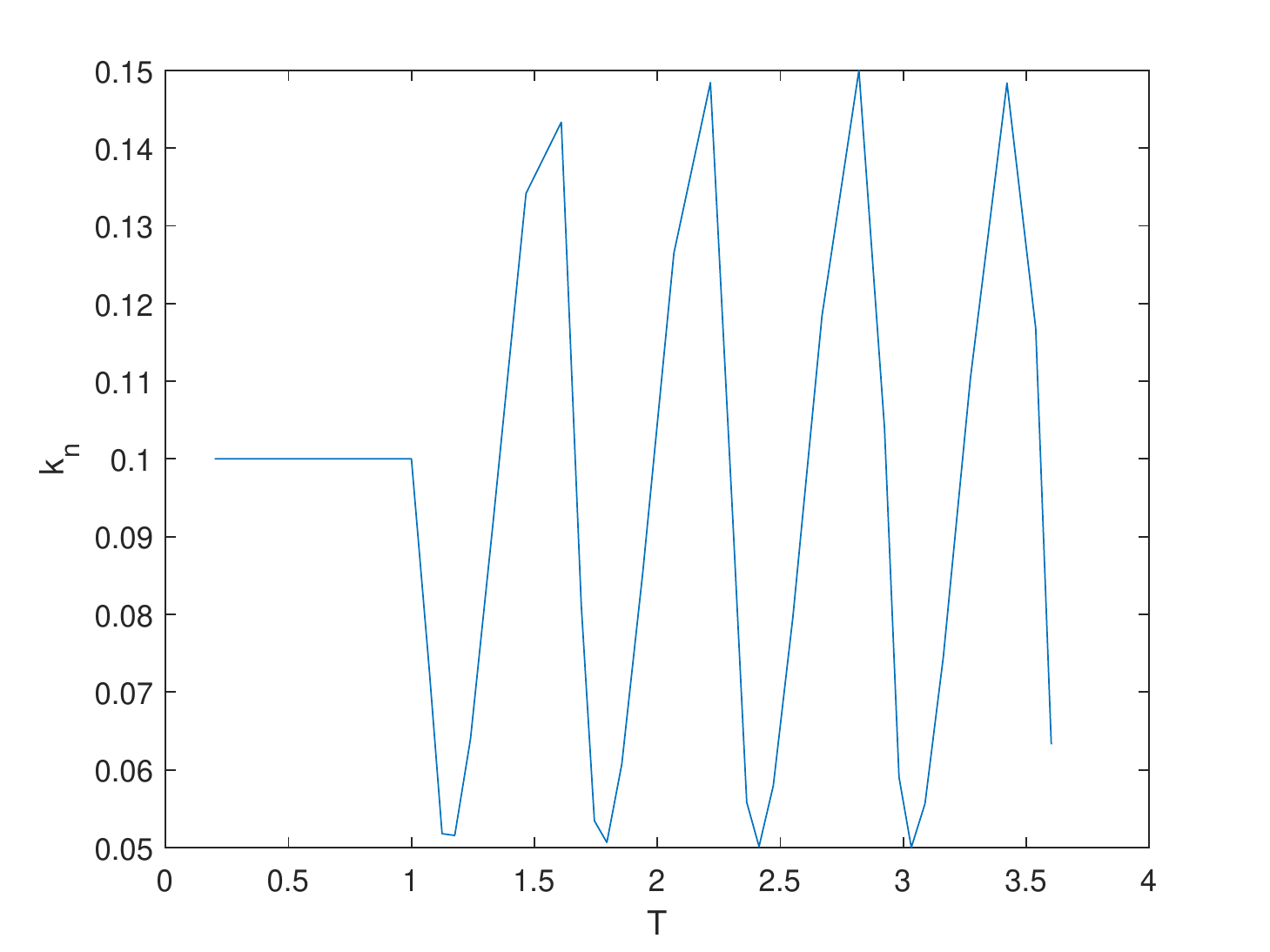}\\
			\vspace{0.02cm}
					\end{minipage}	} \centering
\vspace{-0.2cm}
\caption{Change of step size $k_{n}$.}
\label{fig2}
\end{figure}
Figure \ref{fig3} shows the speed contours and velocity streamlines with parameter $\theta=0.2,0.5,0.7$ respectively. From the graphs in Figure \ref{fig3}, we observe that good performance can be obtained for all three cases. Figure \ref{fig4a} and Figure \ref{fig4b} respectively show the comparison between the approximate solutions and the true solutions of the incompressible fluid velocity $\u_{f}$ and porous media fluid hydraulic head $\phi$  with different $\theta$. The variable timestepping DLN algorithm approximate exact solutions well, which confirms the stability of the DLN algorithm.
\begin{figure}[ptbh]
\centering
\par
\subfigure[$\theta = 0.2$]{
\begin{minipage}[t]{0.45\linewidth}
\centering
\includegraphics[width=2.5in,height=2.0in]{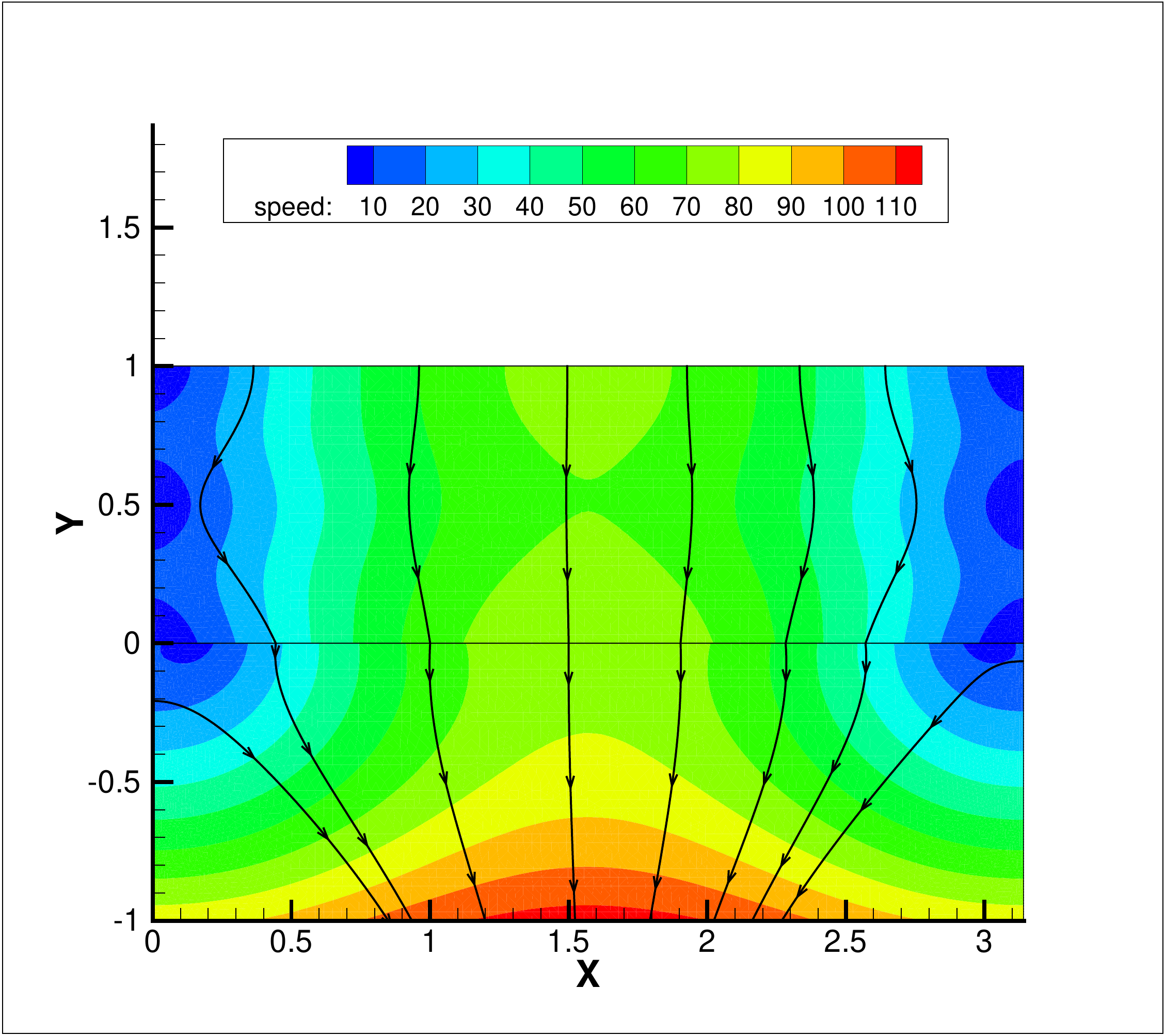}\\
\vspace{0.02cm}
\end{minipage}}%
\subfigure[$\theta =0.5$]{
\begin{minipage}[t]{0.33\linewidth}
\centering
\includegraphics[width=2.5in,height=2.0in]{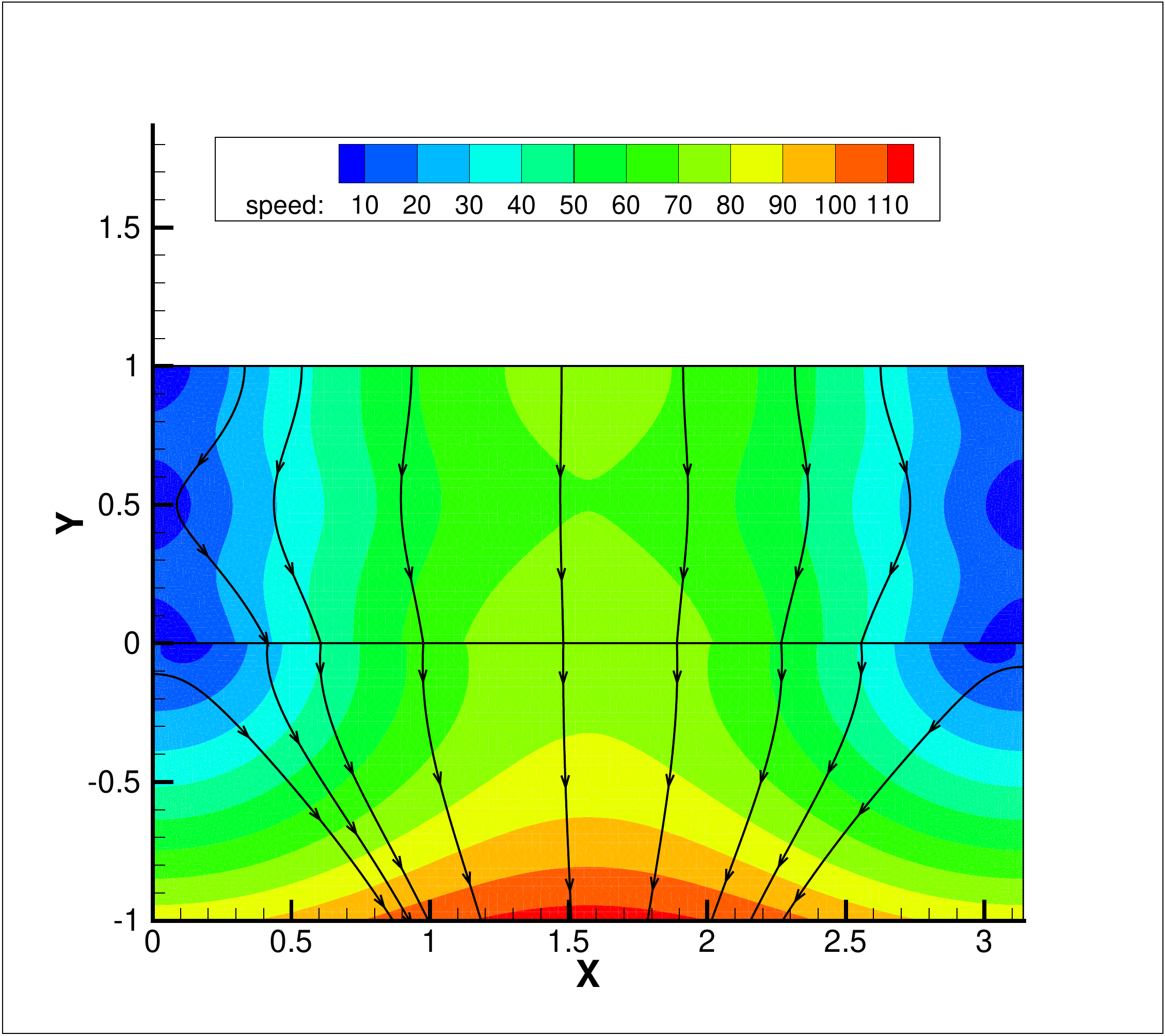}\\
\vspace{0.02cm}
\end{minipage}}\newline
\subfigure[$\theta =0.7$]{
\begin{minipage}[t]{0.33\linewidth}
\centering
\includegraphics[width=2.5in,height=2.0in]{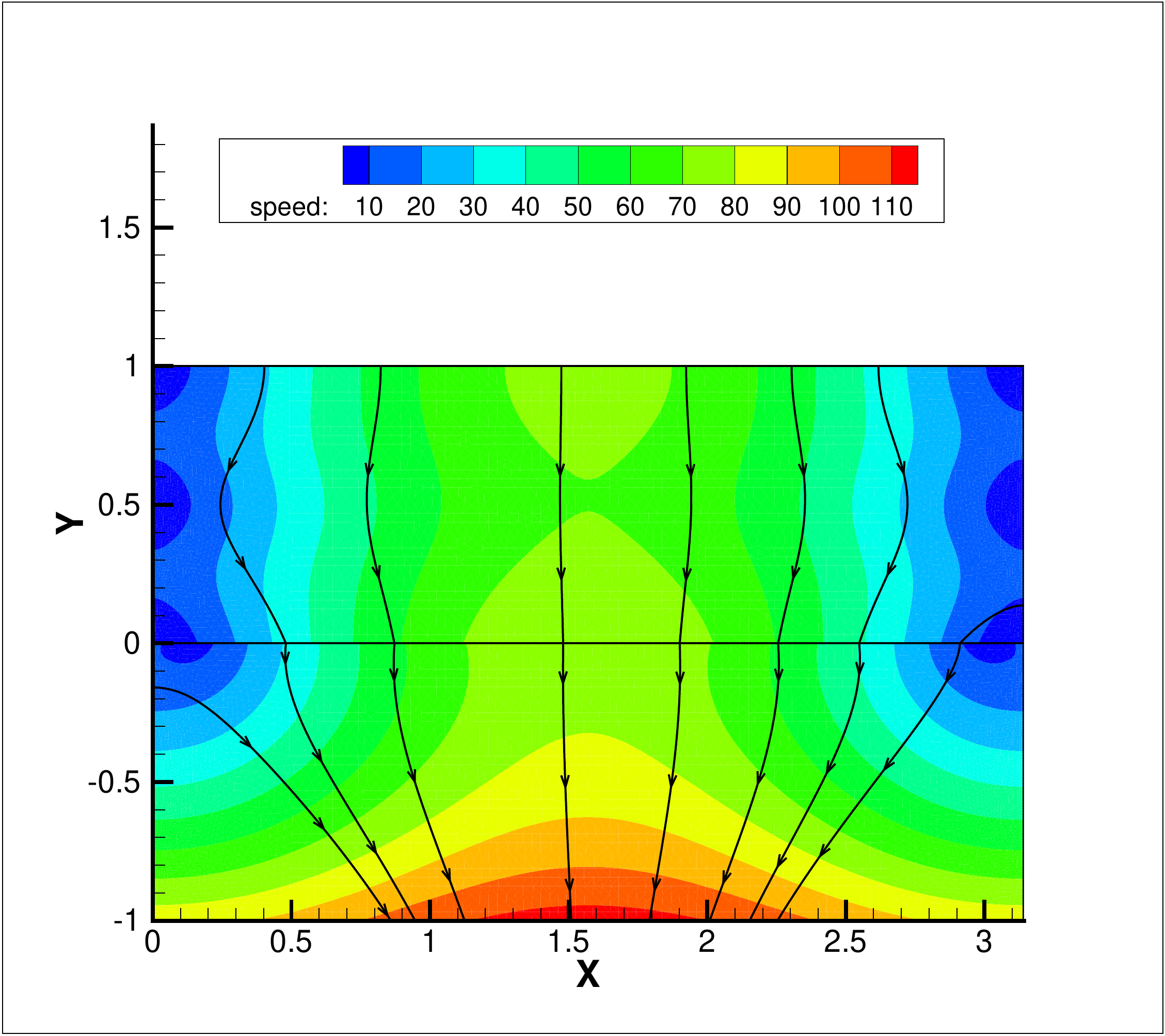}\\
\vspace{0.02cm}
\end{minipage}}%
\par
\centering
\vspace{-0.2cm}
\caption{The speed contours and velocity streamlines with $\theta=0.2,0.5,0.7$. }
\label{fig3}
\end{figure}

\begin{figure}[ptbh]
\centering
\par
\subfigure[Comparison for velocity $\u_{f}$]{ \label{fig4a}
\begin{minipage}[t]{0.45\linewidth}
\centering
\includegraphics[width=2.8in,height=2.2in]{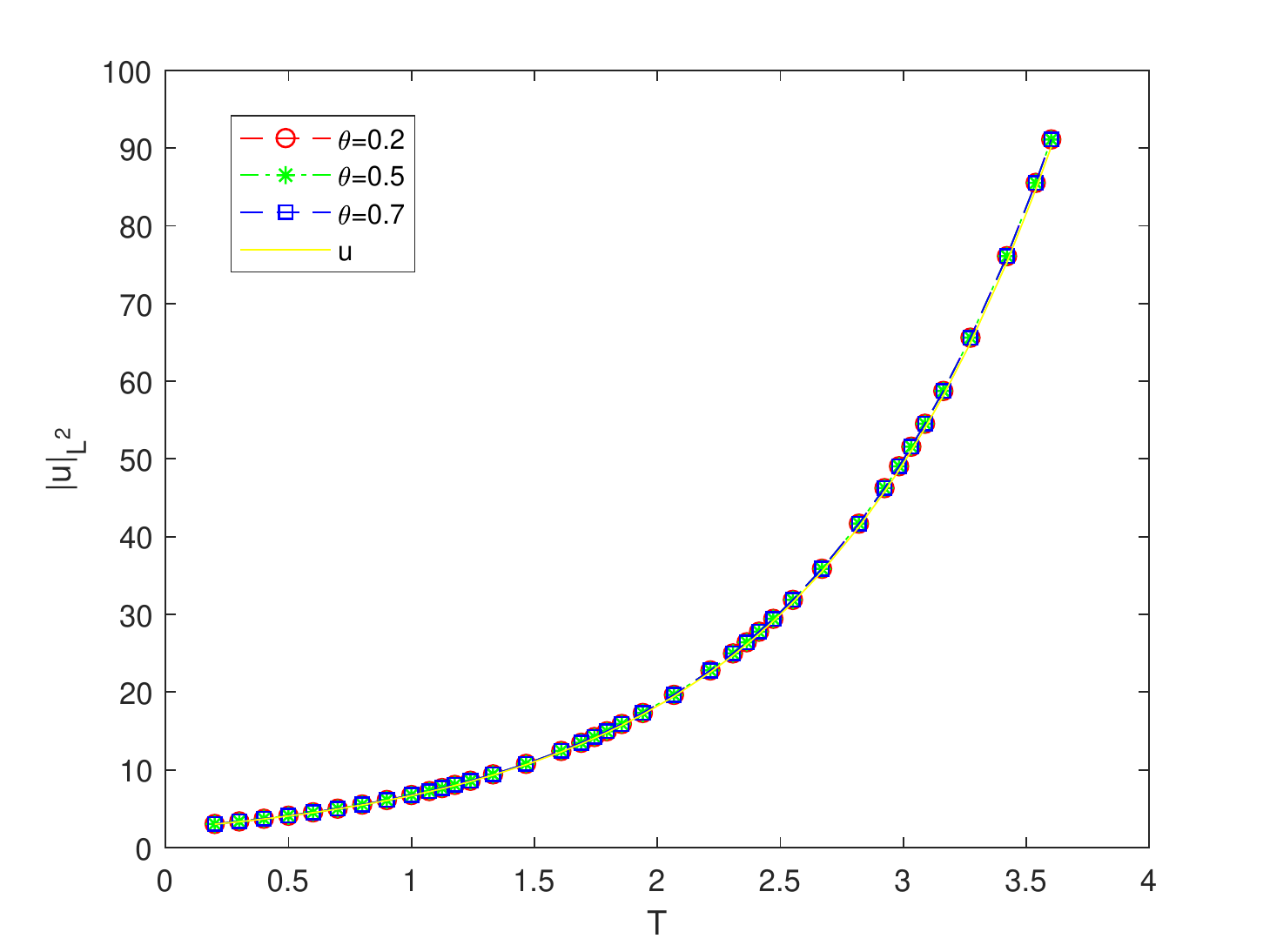}\\
\vspace{0.02cm}
\end{minipage}} \label{fig4}
\subfigure[Comparison for hydraulic head $\phi$]{ \label{fig4b}
\begin{minipage}[t]{0.45\linewidth}
\centering
\includegraphics[width=2.8in,height=2.2in]{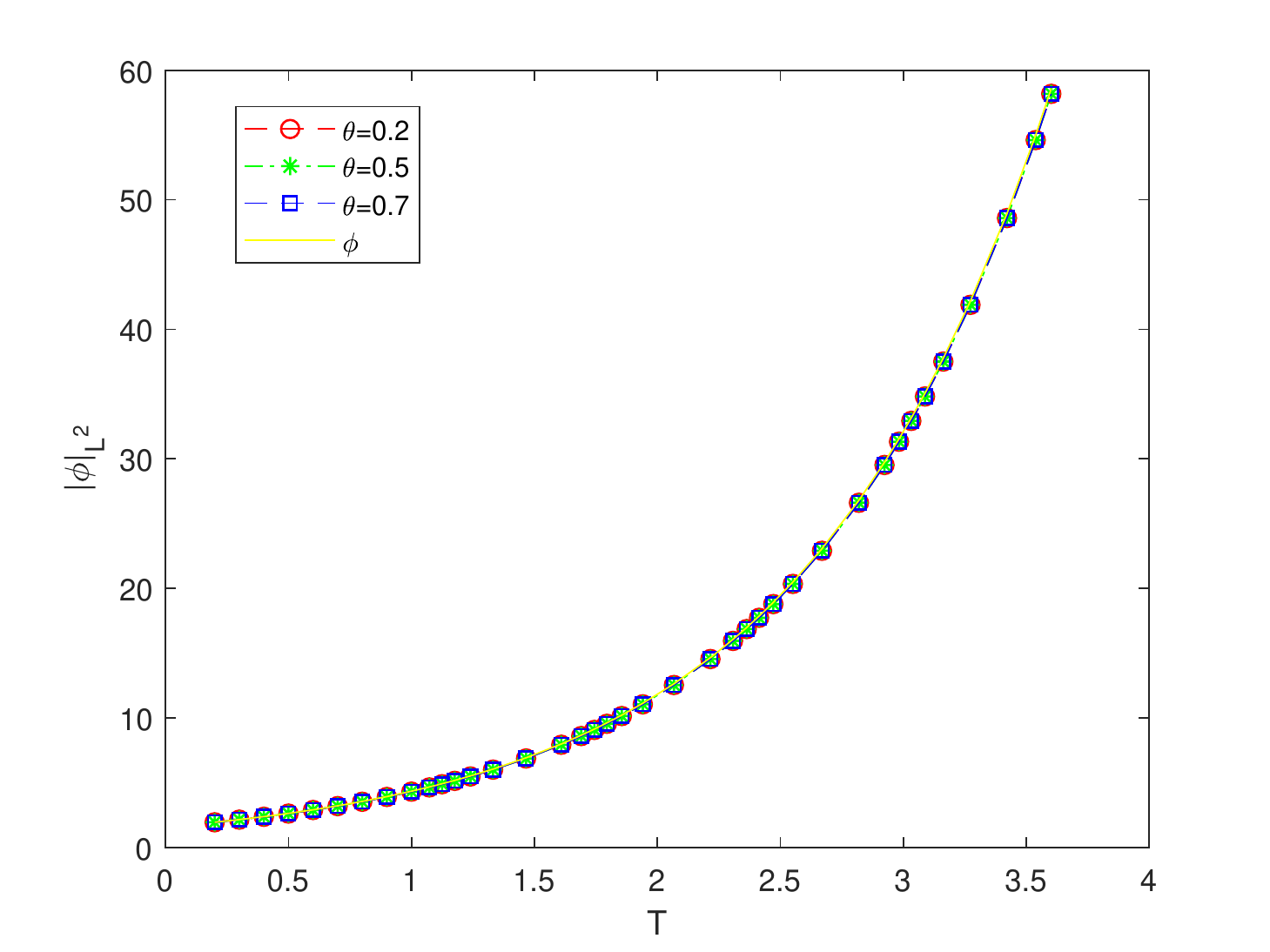}\\
\vspace{0.02cm}
\end{minipage}}
\par
\centering
\vspace{-0.2cm}
\caption{Comparison between the approximate solutions and the exact solutions with different parameter $\theta$.}
\end{figure}

\subsection{Test of constant Timestepping DLN algorithm}
For constant timestep test, we refer to the numerical example in \cite{mu2010SDdecoupledmixed}.
Let the computational domain $\Omega$ be composed of $\Omega_f=(0,1)\times(1,2)$ and $\Omega_p=(0,1)\times(0,1)$ with the interface $\Gamma=(0,1)\times \{1\}$. We set the total time $T=1$. The exact solution is:
\begin{align*}
\u_{f}&= \left((x^2(y-1)^2+y)\cos(t),-\frac{2}{3}x(y-1)^3\cos(t)+(2-\pi\sin(\pi x))\cos(t) \right), \\
p&=\left( 2-\pi\sin(\pi x) \right)\sin \left(\frac{1}{2}\pi y \right)\cos(t), \\
\phi&=\left(2-\pi\sin(\pi x) \right) \left( 1-y-\cos(\pi y) \right) \cos(t).
\end{align*}
For this test, MINI (P1b-P1) space and piecewise linear polynomials (P1) space are used for the approximation of the incompression fluid and the porous equation respectively. To confirm the consistency of the DLN algorithm, we set $h=\Delta t$ and calculate the errors and convergence rates for the functions $\u_{f}$, $\phi$ and $p$. The rate of convergence $r$ is calculated by
\begin{align*}
r=\ln(e(\Delta t_{1})/e(\Delta t_{2}))/\ln(\Delta t_{1} / \Delta t_{2}),
\end{align*}
where $e(\Delta t)$ is the error computed by the DLN algorithm with time stepsize $\Delta t$.

Table \ref{table1}, \ref{table2} and \ref{table3} show the fluid velocity $\u_{f}$, hydraulic head $\phi$  and pressure $p$ errors of the DLN algorithm when $\theta=0.2,0.5,0.7$. The results are almost the same for three different $\theta$, but as $\theta$ increases, the errors of $\u_{f}$ decrease slightly, while the errors of $\phi$ increase. Thus how to choose the best parameters leaves an open question.
Moreover Table \ref{table4}, Table \ref{table5} and Table \ref{table6} show the convergence rate of velocity $\u_{f}$, hydraulic head $\phi$ and pressure $p$ with different $\theta$ and therefore verify the second-order convergence of the DLN algorithm. Finally, Table \ref{table7} shows the corresponding errors obtained by the common BDF2 method. By comparison, we can see that the DLN algorithm obtains a better hydraulic head $\phi$ than BDF2 method.

\begin{table}[H]
	\centering
	\caption{The errors for DLN scheme with $\theta=0.2$.}
	\begin{tabular}{llllll}
		\hline
		$\Delta t=h$     &$\Vert |\e_{\u_{f}}| \Vert_{2,0}$ & $\Vert |\e_{\u_{f}} |\Vert_{2,1}$   & $\Vert |\e_{\phi} | \Vert_{2,0}$  & $\Vert |\e_{\phi}| \Vert_{2,1}$& $\Vert |e_{p}| \Vert_{2,0}$\\ \hline
		$1/10$              &0.0163655             &0.599657                 &0.0143625                 &0.552125                &0.175753 \\
		$1/16$              &0.00657067           &0.354318                 &0.00587243               &0.359717                &0.0785158    \\
		
		$1/22$             &0.00353871            &0.255182                 &0.00317754               &0.268333                &0.0490189   \\
		
		$1/28$             &0.00218857           &0.191492	                &0.00198363               &0.2117                     &0.0306542     \\
		$1/34$             &0.00150194           &0.160602	                &0.00135819               &0.177254                  &0.0213342     \\
		\hline
	\end{tabular} \label{table1}
\end{table}

\begin{table}[H]
	\centering
	\caption{The errors for DLN scheme with $\theta=0.5$.}
	\begin{tabular}{llllll}
		\hline
		$\Delta t=h$     &$\Vert |\e_{\u_{f}}| \Vert_{2,0}$ & $\Vert |\e_{\u_{f}} |\Vert_{2,1}$   & $\Vert |\e_{\phi} | \Vert_{2,0}$  & $\Vert |\e_{\phi}| \Vert_{2,1}$& $\Vert |e_{p}| \Vert_{2,0}$\\ \hline
		$1/10$              &0.01615                 &0.506002                 &0.0146238                 &0.551755                 &0.138243  \\
		$1/16$              &0.00652393          &0.311263                  &0.00599802               &0.359655                &0.0637115    \\
		
		$1/22$             &0.00351853           &0.22917                    &0.00324735               &0.268314                &0.04083   \\
		
		$1/28$             &0.00218086           &0.176397	                &0.00202875               &0.211693                 &0.0260884     \\
		$1/34$             &0.00149633           &0.148517	                &0.0013883                  &0.177249                 &0.0184629     \\
		\hline
	\end{tabular} \label{table2}
\end{table}

\begin{table}[H]
	\centering
	\caption{The errors for DLN scheme with $\theta=0.7$.}
	\begin{tabular}{llllll}
		\hline
		$\Delta t=h$     &$\Vert |\e_{\u_{f}}| \Vert_{2,0}$ & $\Vert |\e_{\u_{f}} |\Vert_{2,1}$   & $\Vert |\e_{\phi} | \Vert_{2,0}$  & $\Vert |\e_{\phi}| \Vert_{2,1}$& $\Vert |e_{p}| \Vert_{2,0}$\\ \hline
		$1/10$              &0.0161161               &0.488013                 &0.0150263                 &0.551591                 &0.128276 \\
		$1/16$              &0.00652022          &0.30443                   &0.00616699               &0.359622                &0.0604363    \\
		
		$1/22$             &0.00351759           &0.225303                  &0.00333733               &0.268301                &0.0393132   \\
		
		$1/28$             &0.00218125           &0.174198	                &0.00208573                &0.211687                 &0.0252779     \\
		$1/34$             &0.00149674           &0.14679	                &0.00142616                  &0.177246                 &0.0179642     \\
		\hline
	\end{tabular} \label{table3}
\end{table}

\begin{table}[H]
	\centering
	\caption{The convergence order of errors for DLN scheme with $\theta=0.2$.}
	\begin{tabular}{llllll}
		\hline
		$\Delta t=h$     & $r_{\u_{f},0}$ & $r_{\u_{f},1}$   &$r_{\phi,0}$  &$r_{\phi,1}$    &$r_{p,0}$\\ \hline
		$1/10$              &-                &-                 &-                    &-                    &- \\
		$1/16$              &1.9416       &1.11949       &1.90286         &0.911604       &1.71441    \\
		
		$1/22$             &1.94331      &1.03066       &1.92857          &0.920353       &1.47931  \\
		
		$1/28$             &1.99249       &1.19062	      &1.95378         &0.982976       &1.94657     \\
		$1/34$             &1.93911       &0.906045	     &1.9509            &0.914669        &1.86683     \\
		\hline
	\end{tabular} \label{table4}
\end{table}

\begin{table}[H]
	\centering
	\caption{The convergence order of errors for DLN scheme with $\theta=0.5$.}
	\begin{tabular}{llllll}
		\hline
		$\Delta t=h$     & $r_{\u_{f},0}$ & $r_{\u_{f},1}$   &$r_{\phi,0}$  &$r_{\phi,1}$    &$r_{p,0}$\\ \hline
	    $1/10$              &-                &-                 &-                    &-                    &- \\
		$1/16$              &1.92895      &1.03382       &1.8962         &0.910541       &1.64818    \\
		
		$1/22$             &1.93885      &0.961447       &1.92678         &0.920035       &1.39722  \\
		
		$1/28$             &1.98342       &1.08527	      &1.95063         &0.982834       &1.85737     \\
		$1/34$             &1.94019       &0.886068	     &1.9538           &0.914617        &1.78066     \\
		\hline
	\end{tabular} \label{table5}
\end{table}

\begin{table}[H]
	\centering
	\caption{The convergence order of errors for DLN scheme with $\theta=0.7$.}
	\begin{tabular}{llllll}
		\hline
		$\Delta t=h$     & $r_{\u_{f},0}$ & $r_{\u_{f},1}$   &$r_{\phi,0}$  &$r_{\phi,1}$    &$r_{p,0}$\\ \hline
	    $1/10$              &-                &-                 &-                    &-                    &- \\
		$1/16$              &1.92532      &1.00404       &1.89485         &0.910111       &1.60126    \\
		
		$1/22$             &1.93791      &0.945172       &1.92819         &0.919886       &1.35037  \\
		
		$1/28$             &1.98157       &1.06674	      &1.94911         &0.982746       &1.83126     \\
		$1/34$             &1.93971       &0.881715	     &1.95786           &0.914595        &1.75915     \\
		\hline
	\end{tabular} \label{table6}
\end{table}

\begin{table}[H]
	\centering
	\caption{The errors for BDF2 scheme.}
	\begin{tabular}{llllll}
		\hline
		$\Delta t=h$     &$\Vert |\e_{\u_{f}}| \Vert_{2,0}$ & $\Vert |\e_{\u_{f}} |\Vert_{2,1}$   & $\Vert |\e_{\phi} | \Vert_{2,0}$  & $\Vert |\e_{\phi}| \Vert_{2,1}$& $\Vert |e_{p}| \Vert_{2,0}$\\ \hline
		$1/10$              &0.0160291               &0.450396                &0.0165148                 &0.551278                &0.116047  \\
		$1/16$              &0.00650765          &0.290462                   &0.00680715               &0.359553                &0.0561277    \\
		
		$1/22$             &0.00351566           &0.2176                      &0.0036845                &0.268273                &0.0373131   \\
		
		$1/28$             &0.00218218           &0.169732                  &0.00230674                &0.211677                 &0.024088     \\
		$1/34$             &0.00149872           &0.143413	                &0.00157485                 &0.177236                &0.0171673     \\
		\hline
	\end{tabular} \label{table7}
\end{table}

\section{Conclusions} \label{sec:conclusions}
This report has shown that the DLN algorithm has advantages on variable timestepping analysis for the unsteady Stokes/Darcy model due to unconditional, long time $G$-stability and second order accuracy under variable time steps.
Stability of the approximate solutions are obtained by $G$-stability of the DLN algorithm and second order accuracy of the numerical simulations are derived from combination of $G$-stability and consistency properties of the DLN
algorithm. Therefore the variable time stepping algorithm would be popular if the complexity of the DLN algorithm is overcome. One efficient way would be implementation of the DLN algorithm through adding time filters on certain first order implicit method. Moreover, adaptivity process for the DLN algorithm would highly reduce the computation cost if reliable estimators of local truncation error can be obtained.

\bibliographystyle{amsplain}

\providecommand{\bysame}{\leavevmode\hbox to3em{\hrulefill}\thinspace}
\providecommand{\MR}{\relax\ifhmode\unskip\space\fi MR }
\providecommand{\MRhref}[2]{%
  \href{http://www.ams.org/mathscinet-getitem?mr=#1}{#2}
}
\providecommand{\href}[2]{#2}

\end{document}